\DeclareMathOperator{\linspan}{span}
\newlength\fatwidth
\title{Generating nested quadrature~rules with positive weights based on arbitrary sample sets}
\author[1,2]{L.M.M.~van~den~Bos\footnote{Corresponding author: \texttt{l.m.m.van.den.bos@cwi.nl}}}
\author[1]{B.~Sanderse}
\author[2]{W.A.A.M.~Bierbooms}
\author[2]{G.J.W.~van~Bussel}
\affil[1]{Centrum~Wiskunde~\&~Informatica, P.O.~Box 94079, 1090~GB, Amsterdam}
\affil[2]{Delft~University~of~Technology, P.O.~Box 5, 2600~AA, Delft}
\begin{document}
\maketitle

\begin{abstract}
\noindent For the purpose of uncertainty propagation a new quadrature rule technique is proposed that has positive weights, has high degree, and is constructed using only samples that describe the probability distribution of the uncertain parameters. Moreover, nodes can be added to the quadrature rule, resulting into a sequence of nested rules. The rule is constructed by iterating over the samples of the distribution and exploiting the null space of the Vandermonde-system that describes the nodes and weights, in order to select which samples will be used as nodes in the quadrature rule. The main novelty of the quadrature rule is that it can be constructed using any number of dimensions, using any basis, in any space, and using any distribution. It is demonstrated both theoretically and numerically that the rule always has positive weights and therefore has high convergence rates for sufficiently smooth functions. The convergence properties are demonstrated by approximating the integral of the Genz test functions. The applicability of the quadrature rule to complex uncertainty propagation cases is demonstrated by determining the statistics of the flow over an airfoil governed by the Euler equations, including the case of dependent uncertain input parameters. The new quadrature rule significantly outperforms classical sparse grid methods.
\end{abstract}

\begingroup
\small
\noindent\textbf{Keywords:} Quadrature~formulas (65D32), Numerical~integration (65D30), Uncertainty~Propagation (65C99)
\endgroup

\section{Introduction}
The problem of uncertainty propagation is considered, where the interest is in the effect of uncertainties in model inputs on model predictions. The distribution of the quantity of interest is assessed non-intrusively, i.e.\ by means of collocation. Problems of this form occur often in engineering applications if boundary or initial conditions are not known precisely. The canonical approach is firstly to identify uncertain input parameters, secondly to define a distribution on these parameters, and finally to determine statistics of the quantity of interest~\cite{Xiu2010,LeMaitre2010,Najm2009}. These statistics are defined as integrals and various techniques exist to approximate these. However, in practice it often occurs that the distribution of the uncertain parameters is only known through a sequence or collection of samples and that the distribution is possibly correlated, e.g.\ the distribution is inferred through Bayesian analysis. The goal of this work is to construct collocation methods that are accurate for determining integrals when only samples of the distribution are known.

Several approaches exist to tackle problems of this type. In many cases the well-known and straightforward Monte Carlo approach is not applicable due to its low convergence rate of $1 / \sqrt{N}$ (with $N$ number of model evaluations) and instead collocation techniques based on polynomial approximation can be constructed to alleviate this for reasonably small dimensionality. Often these techniques are based on knowledge about the input distribution, for example its moments. A popular technique to choose evaluation nodes is the sparse grid technique~\cite{Smolyak1963,Novak1999}, which has been extended to a more general, correlated setting (mostly in a Bayesian setting, e.g.~\cite{Schillings2013,Zhang2013,Cotter2010,Franck2016}), provided that high order statistics of the distribution are known exactly. Other collocation techniques that can be applied to the setting in this work are techniques to consider the collocation problem as a minimization problem of an integration error~\cite{Sinsbeck2015,Jakeman2017}, to construct nested rules based on interpolatory Leja sequences~\cite{Leja1957,Narayan2014,Bos2018}, or to apply standard quadrature techniques after decorrelation of the distribution~\cite{Navarro2015,Feinberg2018}. All these approaches provide high order convergence, but require that the input distribution is explicitly known.

On the other hand, procedures that directly construct collocation sequences on samples without using the input distribution directly have seen an increase in popularity, possibly due to the recent growth of data sets. A recent example is the clustering approach proposed in~\cite{Eggels2018}. Another technique is based on polynomial approximation directly based on data~\cite{Oladyshkin2012} or iteratively with a focus on large data sets~\cite{Shin2017,Wu2017}. These approaches do not require stringent assumptions on the input distribution, but often do not provide high order convergence.

In this article, we propose a novel nested quadrature rule that has positive weights. There are various existing approaches to construct quadrature rules with positive weights. Examples include numerical op\-ti\-mi\-za\-tion techniques~\mbox{\cite{Keshavarzzadeh2018a,Jakeman2017,Ryu2014}}, where oftentimes the nodes and weights are determined by minimization of the quadrature rule error. A different technique that is closely related to the approach discussed in this article is subsampling~\mbox{\cite{Piazzon2017,Seshadri2017,Bos2016b,Wilson1969}}, where the quadrature rule is constructed by subsampling from a larger set of nodes. Subsampling has also been used in a randomized setting~\mbox{\cite{Wu2017}}, i.e.\ by randomly removing nodes from a large tensor grid, or to deduce a proof for Tchakaloff's theorem~\mbox{\cite{Bayer2006,Davis1967}}

The quadrature rule proposed in this work is called the \emph{implicit quadrature rule}, because it is constructed using solely samples from the distribution. The nodes of the rule form a subset of the samples and the accompanying weights are obtained by smartly exploiting the null space of the linear system governing the quadrature weights. Using a sample set limits the accuracy of the rule to the accuracy of the sample set, but an arbitrarily sized sample set can be used without additional model evaluations. The computational cost of our proposed algorithm scales (at least) linearly in the number of samples and for each sample the null space of a Vandermonde-matrix has to be determined (whose number of rows equals the number of the nodes of the quadrature rule). The main advantage of using a sample set is that the proposed quadrature rule can be applied to virtually any number of dimensions, basis, space, or distribution without affecting the computational cost of our approach. Moreover it can be extended to obtain a sequence of nested distributions, allowing for refinements that reuse existing (costly) model evaluations.

This article is set up as follows. In Section~\ref{sec:nomenclature} the nomenclature and properties of quadrature rules that are relevant for this article are discussed. In Section~\ref{sec:implicitqrule} the implicit quadrature rule is introduced and its mathematical properties are discussed. The accuracy of the quadrature rule is demonstrated by integration of the Genz test functions and by determining the statistical properties of the output of a stochastic partial differential equation modeling the flow over an airfoil. The numerical results of these test cases are discussed in Section~\ref{sec:numerics} and conclusions are drawn in Section~\ref{sec:conclusion}.

\section{Preliminaries}
\label{sec:nomenclature}
The quantity of interest is modeled as a function $u: \Omega \to \mathbb{R}$, where $\Omega$ is a domain in $\mathbb{R}^d$ (with $d = 1, 2, 3, \dots$). The parameters $x \in \Omega$ are uncertain and their distribution is characterized by an arbitrarily large set of samples, denoted by $Y_K \coloneqq \{y_0, \dots, y_K\} \subset \Omega$ (with $K \in \mathbb{N}$). In other words, the parameters $x$ have the following discrete distribution:
\begin{equation}
	\rho_K(x) = \frac{1}{K+1} \sum_{k=0}^K \delta(\| x - y_k \|),
\end{equation}
where $\delta$ denotes the usual Dirac delta function and $\| \cdot \|$ denotes any norm (the only necessary property is that $\| a \| = 0$ if and only if $a \equiv 0$). The function $u$ is not known explicitly, but can be determined for specific values of $x \in \Omega$ (e.g.\ it is the solution of a system of partial differential equations). The goal is to determine statistical moments of $u(x)$, e.g.\ to accurately determine
\begin{equation}
	\label{eq:leintegral}
	\mathcal{I}^{(K)} u \coloneqq \int_\Omega u(x) \, \rho_K(x) \dd x = \frac{1}{K+1} \sum_{k=0}^K u(y_k),
\end{equation}
where higher moments can be determined by replacing $u(x)$ with $u(x)^j$ for given $j$. Notice that if $y_k$ are samples drawn from a known (possibly continuous) distribution $\rho$, \eqref{eq:leintegral} approximates an integral weighted with this distribution, i.e.
\begin{equation}
	\label{eq:exactintegral}
	\mathcal{I}^{(K)} u = \int_\Omega u(x) \, \rho_K(x) \dd x \approx \mathcal{I} u \coloneqq \int_\Omega u(x) \, \rho(x) \dd x.
\end{equation}

We will assume throughout this work that a large number of samples can be determined fast and efficiently or is provided beforehand. There exist various methods to construct samples from well-known distributions (such as the Gaussian, Beta, and Gamma distribution)~\cite{Devroye1986}, from general distributions by means of acceptance rejection approaches, or from unscaled probability density functions by means of Markov chain Monte Carlo methods~\cite{Metropolis1953,Hastings1970}. An example of acceptance rejection sampling that we will use throughout this text to visualize our methods is depicted in Figure~\ref{fig:logos-acceptance-reject}.

\begin{figure}
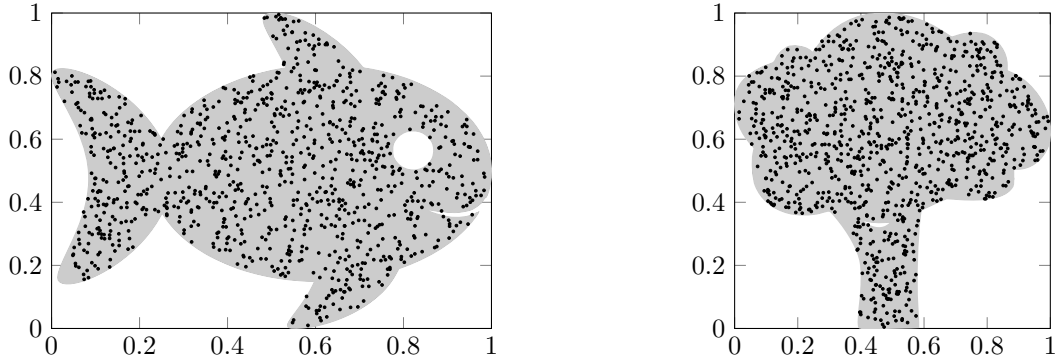

	\begin{minipage}{.5\textwidth}
		\centering
		\includepgf{.9\textwidth}{.7\textwidth}{fish-samples.tikz}
	\end{minipage}%
	\begin{minipage}{.5\textwidth}
		\centering
		\includepgf{.7\textwidth}{.7\textwidth}{tree-samples.tikz}
	\end{minipage}
	\caption{The example used throughout this work: a uniform distribution restricted to the gray sets (with 1000 samples dotted in black).}
	\label{fig:logos-acceptance-reject}
\end{figure}

If $K+1$ samples $Y_K = \{y_0, \dots, y_K\}$ are given, \eqref{eq:leintegral} could naively be evaluated by determining $u(y_k)$ for all $k$. However, it is well-known that such an approximation is very computationally costly in many practical problems. Instead we approximate the moments by means of a quadrature rule, i.e.\ the goal is to determine a finite number of nodes, denoted by the indexed set $X_N = \{x_0, \dots, x_N\} \subset \Omega$, and weights, denoted by $W_N = \{w_0, \dots, w_N\} \subset \mathbb{R}$ such that
\begin{equation}
	\mathcal{I}^{(K)} u \approx \sum_{k=0}^N u(x_k) w_k \eqqcolon \mathcal{A}^{(K)}_N u.
\end{equation}
The operator $\mathcal{A}^{(K)}_N$ is the quadrature rule operator using the nodal set $X_N$. We omit the number of samples $K$ from the notation if it is clear from the context.

Three properties are relevant in deriving quadrature rules: accuracy, positivity, and nesting. These properties are briefly discussed in the upcoming Sections~\ref{subsec:degree},~\ref{subsec:positivity},~and~\ref{subsec:nesting}. The terms \emph{nodes} and \emph{samples} are sometimes used interchangeably in a quadrature rule setting. This is not the case in this article: samples are elements from sample sets statistically describing a distribution (called $Y_K$) whereas nodes are the collocation points from a quadrature rule (called $X_N$).

\subsection{\texorpdfstring{Accuracy}{Accuracy}}
\label{subsec:degree}
We enforce that the quadrature rule is accurate on a finite-dimensional function space, denoted by $\Phi_D \coloneqq \allowbreak \linspan\{ \varphi_0, \dots, \varphi_D \}$ throughout this article. Here $\varphi_0, \dots, \varphi_D$ are basis polynomials with $\deg \varphi_j \leq \deg \varphi_k$ for $j \leq k$, such that $\Phi_D \subset \Phi_{D+1}$ for any $D$. In the univariate case, this is equivalent to enforcing that the quadrature rule has degree $D$. In the multivariate case, the quadrature rule has at least degree $Q$ if $\dim \Phi_D \geq \binom{Q+d}{d}$. The operator $\mathcal{A}^{(K)}_N$ is linear, hence if $D = N$ and $K$ is given, the weights can be determined from the nodes by solving the following linear system:
\begin{equation}
	\label{eq:linearsystem}
	\mathcal{A}^{(K)}_N \varphi_j = \mathcal{I}^{(K)} \varphi_j, \text{ for $j = 0, \dots, D$}.
\end{equation}
In the univariate case, this linear system is non-singular if all nodes are distinct. This does not hold in general in the multivariate case or if $D \neq N$.

The linear system \eqref{eq:linearsystem} will be used often in this work to ensure the accuracy of the constructed quadrature rules. The matrix of this system is called the (multivariate) Vandermonde-matrix $V_N$. If a basis $\varphi_0, \dots, \varphi_D$ is given, the system of \eqref{eq:linearsystem} can be alternatively written as
\begin{equation}
	\label{eq:vandermonde}
	V_D(X_N) \mathbf{w} \coloneqq \begin{pmatrix}
		\varphi_0(x_0) & \cdots & \varphi_0(x_N) \\
		\vdots & \ddots & \vdots \\
		\varphi_D(x_0) & \cdots & \varphi_D(x_N)
	\end{pmatrix}
	\begin{pmatrix}
		w_0 \\
		\vdots \\
		w_N
	\end{pmatrix}
	=
	\begin{pmatrix}
		\mu^{(K)}_0 \\
		\vdots \\
		\mu^{(K)}_D
	\end{pmatrix}.
\end{equation}
Here, $\mu^{(K)}_j$ are known as the (multivariate) raw moments of the samples $Y_K$, i.e.
\begin{equation}
	\label{eq:samplemoments}
	\mu^{(K)}_j \coloneqq \frac{1}{K+1} \sum_{k=0}^K \varphi_j(y_k) = \mathcal{I}^{(K)} \varphi_j.
\end{equation}

Throughout this article it is assumed that $\Phi_D$ is a polynomial space of minimal degree and that $\varphi_k$ is a monomial for each $k$. Multivariate polynomials are sorted using the graded reverse lexicographic order. All methods discussed in this article can also be applied if the polynomials are sorted differently (i.e.\ a sparse or an orthonormal basis is considered) or if the basis under consideration is not polynomial at all (e.g.\ sinusoidal). The only imposed restriction is that $\varphi_0$ is the constant function.

The matrix $V_D(X_N)$ might become ill-conditioned if it is constructed using monomials even for small $N$. Since this matrix is used to construct quadrature rules in this article, this can limit the applicability of the methods discussed here. In this article, all quadrature rules have been constructed using (products of) Legendre polynomials, which resulted in a sufficiently well-conditioned matrix for moderately large $N$ and $D$.

\subsection{Positivity, stability, and convergence}
\label{subsec:positivity}
Any constructed quadrature rule in this article has solely positive weights for two reasons: stability and convergence. We call such a quadrature rule simply a \emph{positive} quadrature rule. Both stability and convergence follow from the fact that the induced $\infty$-norm of $\mathcal{A}^{(K)}_N$ (which is the condition number of $\mathcal{A}^{(K)}_N$ as $\mu^{(K)}_0 = 1$) equals the sum of the absolute weights, i.e.
\begin{equation}
	\| \mathcal{A}^{(K)}_N \|_\infty \coloneqq \sup_{\| u \|_\infty = 1} | \mathcal{A}^{(K)}_N u | = \sum_{k=0}^N |w_k|, \text{ with } \| u \|_\infty \coloneqq \max_{x \in \Omega} |u(x)|.
\end{equation}
This norm is minimal for quadrature rules with positive weights. In these cases, we have that for all $K$:
\begin{equation}
	\label{eq:stability}
	\| \mathcal{A}^{(K)}_N \|_\infty = \sum_{k=0}^N |w_k| = \sum_{k=0}^N w_k = 1 = \mathcal{I}^{(K)} 1.
\end{equation}
If a function $u$ is perturbed by a numerical error $\vareps$, say $\tilde{u} = u + \vareps$, this does not significantly effect $\mathcal{A}^{(K)}_N u$:
\begin{equation}
	| \mathcal{A}^{(K)}_N u - \mathcal{A}^{(K)}_N \tilde{u} | \leq \| \mathcal{A}^{(K)}_N \|_\infty |u - \tilde{u}| = \vareps.
\end{equation}
This demonstrates that a quadrature rule with positive weights is numerically stable, regardless of the nodal set under consideration.

Convergence can be demonstrated similarly. This can be observed by applying the Lebesgue inequality~\cite{Brass2011}. To this end, let $q_D$ be the \emph{best approximation} of $u$ in $\Phi_D$, i.e.
\begin{equation}
	q_D = \argmin_{q \in \Phi_D} \| u - q \|_\infty.
\end{equation}
Here, we assume without loss of generality that this best approximation exists. By using $\mathcal{A}^{(K)}_N q_D = \mathcal{I}^{(K)} q_D$, the Lebesgue inequality follows:
\begin{align}
	\label{eq:lebesgue}
	| \mathcal{I}^{(K)} u - \mathcal{A}^{(K)}_N u | &= | \mathcal{I}^{(K)} u - \mathcal{I}^{(K)} q_D + \mathcal{I}^{(K)} q_D - \mathcal{A}^{(K)}_N u | \\
	&= | \mathcal{I}^{(K)} u - \mathcal{I}^{(K)} q_D + \mathcal{A}^{(K)}_N q_D - \mathcal{A}^{(K)}_N u | \\
	&\leq | \mathcal{I}^{(K)} u - \mathcal{I}^{(K)} q_D | + | \mathcal{A}^{(K)}_N q_D - \mathcal{A}^{(K)}_N u | \\
	&= | \mathcal{I}^{(K)} (u - q_D) | + | \mathcal{A}^{(K)}_N (q_D - u) | \\
	&\leq \| \mathcal{I}^{(K)} \|_\infty \| u - q_D \|_\infty + \| \mathcal{A}^{(K)}_N \|_\infty \| u - q_D \|_\infty.
\end{align}
If $w_k = | w_k |$, it holds that $\| \mathcal{A}^{(K)}_N \|_\infty = \| \mathcal{I}^{(K)} \|_\infty = 1$ (see \eqref{eq:stability}) and convergence follows readily if $\| u - q_D \|_\infty \to 0$ for $D \to \infty$, i.e.
\begin{equation}
	\label{eq:convergence}
	| \mathcal{I}^{(K)} u - \mathcal{A}^{(K)}_N u | \leq 2 \inf_{q \in \Phi_D} \| u - q \|_\infty.
\end{equation}
The rate of convergence depends on the specific characteristics of $u$: if the space $\Phi_D$ (here polynomials) is suitable for approximating $u$, the error of the quadrature rule will decay fast (e.g.\ exponentially fast if $u$ is analytic). It is well known that $u$ can be approximated well using a polynomial if among others $u$ is absolute continuous in a closed and bounded set $\Omega$, but various other results on this topic exist~\cite{Jackson1982,Watson1980,Brass2011}.

Notice that the error of the quadrature rule $\mathcal{A}^{(K)}_N u$ with respect to $\mathcal{I}^{(K)} u$ does not depend on the accuracy of the moments $\mu^{(K)}_j$, i.e.\ on whether the number of samples is large enough to resolve $\mu^{(K)}_j$ accurately. This can be seen as follows. Assume the samples $Y_K$ are drawn from a distribution $\rho: \Omega \to \mathbb{R}$ and let $\mathcal{I}$ be the integral from \eqref{eq:exactintegral} weighted with this distribution. Even though $|\mathcal{I} \varphi_j - \mathcal{I}^{(K)} \varphi_j|$ can become large for increasing $j$, the error of the quadrature rule is not necessarily large:
\begin{equation}
	| \mathcal{I} u - \mathcal{A}^{(K)}_N u | \leq \underbrace{| \mathcal{I} u - \mathcal{I}_{\phantom{N}}^{(K)} u |}_{\text{Sampling error}} + \underbrace{| \mathcal{I}_{\phantom{N}}^{(K)} u - \mathcal{A}^{(K)}_N u|}_{\text{Quadrature error}}.
\end{equation}
The error depends on two components. The sampling error describes whether the number of samples is large enough to approximate the integral of $u$ (which is independent of $\varphi_j$), whereas the quadrature error describes whether the quadrature rule is accurate (which depends on $\varphi_j$, but not through the samples, see \eqref{eq:convergence}). The quadrature error is conceptually different than the sampling error and often decreases much faster in $N$ than the sampling error does in $K$. As we assume an arbitrarily sized sequence of samples is readily available to make the sampling error sufficiently small, this article will focus on the quadrature error.

\subsection{Nesting}
\label{subsec:nesting}
Nesting means that $X_{N_1} \subset X_{N_2}$ for some $N_1 < N_2$, i.e.\ the nodes of a smaller quadrature rule are contained in a larger quadrature rule. This allows for the reuse of model evaluations if the quadrature rule is refined by considering more nodes. We will call such a quadrature rule, with a little abuse of nomenclature, a \emph{nested} quadrature rule (because strictly speaking it is a nested \emph{sequence} of quadrature rules).

A nested quadrature rule has the additional favorable property that it can be used to provide an error estimate of the approximated integral. If the quadrature rule converges to the true integral, i.e.\ $\mathcal{A}_N u \to \mathcal{I} u$, then $| \mathcal{A}_{N_1} u - \mathcal{A}_{N_2} u |$ should converge to 0:
\begin{equation}
	| \mathcal{A}_{N_1} u - \mathcal{A}_{N_2} u | \leq | \mathcal{A}_{N_1} u - \mathcal{I} u | + | \mathcal{A}_{N_2} u - \mathcal{I} u | \to 0, \text{ for $N_1, N_2 \to \infty$}.
\end{equation}
Therefore, the quantity $| \mathcal{A}_{N_1} u - \mathcal{A}_{N_2} u |$ can be used to estimate the accuracy of $A_{N_2} u$. If $X_{N_1} \subset X_{N_2}$, this error estimation can be calculated without any additional model evaluations.

\section{Implicit quadrature rule}
\label{sec:implicitqrule}
The implicit quadrature rule is a quadrature rule that is constructed using an arbitrarily sized sequence of samples. The crucial equation in the method is \eqref{eq:linearsystem}, which can be written as
\begin{equation}
	\label{eq:keyproperty}
	\sum_{k=0}^N \varphi_j(x_k) w_k = \mu^{(K)}_j, \text{ with } \mu^{(K)}_j = \frac{1}{K+1} \sum_{k=0}^K \varphi_j(y_k), \text{ for $j = 0, \dots, D$}.
\end{equation}
Given a sequence of basis functions $\varphi_0, \dots, \varphi_D$, the left hand side of this equation only depends on the quadrature nodes $X_N$ and weights $W_N$, whereas the right hand side of the equation only depends on the samples $Y_K$. The goal is to determine, based on the $K+1$ samples in the set $Y_K$, a subset of $N+1$ samples that form the nodes $X_N$ of a quadrature rule in such a way that \eqref{eq:keyproperty} is satisfied and such that the corresponding weights are positive. The existence of such a subset is motivated by the Tchakaloff bound~\cite{Davis1967}, which states that there exists a quadrature rule with positive weights with $N = D$ if $\Phi_D$ encompasses polynomials (as in this article).

\begin{figure}
	\centering
	\includegraphics{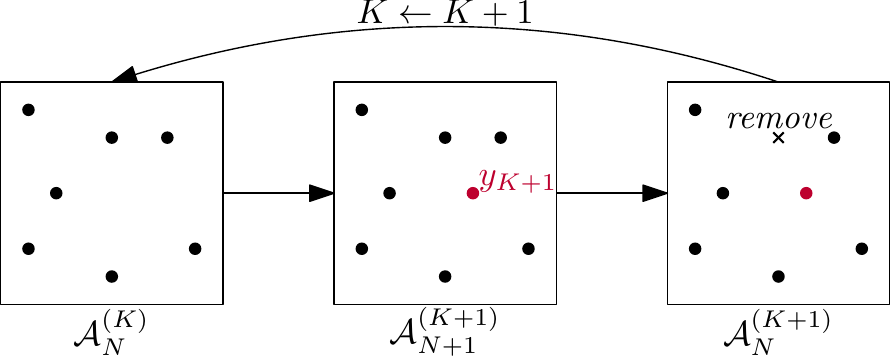}
	\caption{The implicit quadrature rule proposed in this work. Given a quadrature rule that integrates $K$ samples ($\mathcal{A}^{(K)}_N$), a node $y_{K+1}$ is added such that a rule is obtained of one more node ($\mathcal{A}^{(K+1)}_{N+1}$). Finally, one or more nodes are removed to obtain a quadrature rule of fewer nodes ($\mathcal{A}^{(K+1)}_N$).}
	\label{fig:algorithm}
\end{figure}

The approach to determine the quadrature rule is to use an iterative algorithm: starting from an initial quadrature rule, the nodes and weights are changed iteratively while new samples $y_k$ are added. Redundant nodes are removed while ensuring that the accuracy of the quadrature rule does not deteriorate. This iterative step, which is the key idea of the proposed algorithm, is sketched in Figure~\ref{fig:algorithm}. By repeatedly applying this step, a quadrature rule that validates \eqref{eq:keyproperty} is obtained.

Our algorithm is explained in the next two sections. First, in Section~\ref{subsec:basic} we propose a method for a slightly simpler problem: we fix $D$ (or $\Phi_D$) and determine at which nodes the model should be evaluated to integrate the sample moments while preserving positivity of weights. Second, in Section~\ref{subsec:extending} this method is extended to create sequences of \emph{nested} quadrature rules with increasing $D$, increasing $K$, or both. In other words, given $N$ model evaluations, we determine a subset of the samples such that \eqref{eq:keyproperty} is satisfied \emph{and} the provided $N$ model evaluations are reused.

\subsection{Fixed implicit rule}
\label{subsec:basic}
The goal is to construct a positive quadrature rule integrating all $\varphi \in \Phi_D$ exactly, where $D$ is provided a priori. The quadrature rule will consist of (at most) $D+1$ nodes in this case. Without loss of generality, it is assumed that $D < K$, i.e.\ the number of available samples is at least as large as the dimension of $\Phi_D$.

The initial step is to consider $Y_D$ and to construct the following quadrature rule for $N = D$:
\begin{align}
	X^{(D)}_N &= Y_D = \{y_0, \dots, y_D\}, \\
	W^{(D)}_N &= \{1/(D+1), \dots, 1/(D+1)\}.
\end{align}
The upper index describes the set of samples used for the construction, in this case $Y_D$, and the lower index describes the number of nodes of the quadrature rule (i.e.\ $x_0, \dots, x_N$). This initial rule simply approximates the moments by means of Monte Carlo and it is obvious that \eqref{eq:keyproperty} holds for $K = D$.

The iterative procedure works as follows. Assume $X^{(K)}_N$, $W^{(K)}_N$ form the positive quadrature rule integrating all $\varphi \in \Phi_D$ exactly. This quadrature rule has the property that $\mathcal{A}^{(K)}_N \varphi_j = \mu_j^{(K)}$ for $j = 0, \dots, D$. The goal is to construct a quadrature rule that also has this property, but with the moments $\mu_j^{(K+1)}$ as right hand side. To this end, let $y_{K+1}$ be the next sample and straightforwardly determine $X^{(K+1)}_{N+1}$ and $W^{(K+1)}_{N+1}$ as follows:
\begin{align}
	\label{eq:update1}
	X^{(K+1)}_{N+1} &= X^{(K)}_N \cup \{ y_{K+1} \}, \\
	W^{(K+1)}_{N+1} &= \left(\left( \frac{K+1}{K+2} \right) \cdot W^{(K)}_N\right) \cup \left\{ \frac{1}{K+2} \right\},
\end{align}
i.e.\ $y_{K+1}$ is ``added'' to $X^{(K)}_N$ (hence $x_{N+1} = y_{K+1}$) and the weights are changed such that the quadrature rule again integrates the sample moments. The latter can be seen as follows:
\begin{align}
	\sum_{k=0}^N \varphi_j(x_k) \frac{K+1}{K+2} w_k + \frac{1}{K+2} \varphi_j(x_{N+1}) &= \frac{K+1}{K+2} \sum_{k=0}^N \varphi_j(x_k) w_k + \frac{1}{K+2} \varphi_j(x_{N+1}) \\
	&= \frac{K+1}{K+2} \left(\frac{1}{K+1} \sum_{k=0}^K \varphi_j(y_k)\right) + \frac{1}{K+2} \varphi_j(y_{K+1}) \\
	&= \frac{1}{K+2} \sum_{k=0}^{K+1} \varphi_j(y_k) = \mu^{(K+1)}_j.
\end{align}
Here, $w_k$ are the weights from the \emph{original} quadrature rule, i.e.\ $w_k \in W^{(K)}_N$. We will use $v_k$ to denote the weights from the updated quadrature rule, i.e.\ $v_k \in W^{(K+1)}_{N+1}$.

If $W^{(K)}_N$ consists of positive weights, then so does $W^{(K+1)}_{N+1}$. The problem with this simple update is that, compared to the original nodal set, the quadrature rule now requires an additional node to integrate all $\varphi \in \Phi_D$ exactly, resulting in a total of $N+2$ nodes.

In order to construct a quadrature rule that requires only $N+1$ nodes (while preserving positive weights and integrating $\mu_j^{(K+1)}$ exactly), one node will be removed from the extended rule $X^{(K+1)}_{N+1}$, following the procedure outlined in~\cite{Bos2016b}. The procedure has an insightful geometric interpretation, as it is based on Carath\'eodory's theorem and convex cones. In this article the linear algebra interpretation is used in order to facilitate the removal of multiple nodes later in this work.

The Vandermonde-matrix of the extended quadrature rule, i.e.\ $V_D(X^{(K+1)}_{N+1})$, is as follows:
\begin{equation}
	V_D(X^{(K+1)}_{N+1}) = \begin{pmatrix}
		\varphi_0(x_0) & \dots & \varphi_0(x_N) & \varphi_0(x_{N+1}) \\
		\vdots & \ddots & \vdots & \vdots \\
		\varphi_D(x_0) & \dots & \varphi_D(x_N) & \varphi_D(x_{N+1})
	\end{pmatrix}.
\end{equation}
This is a $(D+1) \times (N+2)$-matrix (with $N = D$), so at least one non-trivial null vector $\mathbf{c} = \trans{(c_0, \dots, c_{N+1})}$ of this matrix exists, i.e.
\begin{equation}
	\begin{pmatrix}
		\varphi_0(x_0) & \dots & \varphi_0(x_N) & \varphi_0(x_{N+1}) \\
		\vdots & \ddots & \vdots & \vdots \\
		\varphi_D(x_0) & \dots & \varphi_D(x_N) & \varphi_D(x_{N+1})
	\end{pmatrix}
	\begin{pmatrix}
		c_0 \\
		\vdots \\
		c_N \\
		c_{N+1}
	\end{pmatrix}
	=
	\mathbf{0}.
\end{equation}
Any multiple of $\mathbf{c}$ is also a null vector. Hence it holds for any $\alpha \in \mathbb{R}$ that
\begin{equation}
	\begin{pmatrix}
		\varphi_0(x_0) & \dots & \varphi_0(x_N) & \varphi_0(x_{N+1}) \\
		\vdots & \ddots & \vdots & \vdots \\
		\varphi_D(x_0) & \dots & \varphi_D(x_N) & \varphi_D(x_{N+1})
	\end{pmatrix}
	\begin{pmatrix}
		\alpha c_0 \\
		\vdots \\
		\alpha c_N \\
		\alpha c_{N+1}
	\end{pmatrix}
	=
	\mathbf{0},
\end{equation}
and by combining this with \eqref{eq:keyproperty}, but now for $\mu^{(K+1)}_j$, we obtain the following:
\begin{equation}
	\begin{pmatrix}
		\varphi_0(x_0) & \dots & \varphi_0(x_N) & \varphi_0(x_{N+1}) \\
		\vdots & \ddots & \vdots & \vdots \\
		\varphi_D(x_0) & \dots & \varphi_D(x_N) & \varphi_D(x_{N+1})
	\end{pmatrix}
	\begin{pmatrix}
		v_0 - \alpha c_0 \\
		\vdots \\
		v_N - \alpha c_N \\
		v_{N+1} - \alpha c_{N+1}
	\end{pmatrix}
	=
	\begin{pmatrix}
		\mu_0^{(K+1)} \\
		\vdots \\
		\mu_D^{(K+1)} \\
	\end{pmatrix}.
\end{equation}
This equation can be interpreted as a quadrature rule depending on the free parameter $\alpha$ with nodes $X^{(K+1)}_{N+1}$ and weights $\{v_k - \alpha c_k \mid k = 0, \dots, N+1\}$. The parameter $\alpha$ can be used to remove one node from the quadrature rule, as nodes with weight equal to zero can be removed from the quadrature rule without deteriorating it. There are two options, $\alpha = \alpha_1$ or $\alpha = \alpha_2$:
\begin{align}
	\label{eq:alpha}
	\alpha_1 &= \min_k\left( \frac{v_k}{c_k} \mid c_k > 0 \right) \eqqcolon \frac{v_{k_1}}{c_{k_1}}, \\
	\alpha_2 &= \max_k\left( \frac{v_k}{c_k} \mid c_k < 0 \right) \eqqcolon \frac{v_{k_2}}{c_{k_2}}.
\end{align}
The sets $\{v_k - \alpha_1 c_k\}$ and $\{v_k - \alpha_2 c_k \}$ consist of non-negative weights and (at least) one weight equal to zero. Both $\alpha_1$ and $\alpha_2$ are well-defined, because $\mathbf{c}$ has both positive and negative elements. The latter follows from the fact that $\varphi_0$ is assumed to be a constant and that $\mathbf{c}$ is not equal to the zero vector, i.e.
\begin{equation}
	0 = \sum_{k=0}^{N+1} \varphi_0(x_k) c_k = \varphi_0 \sum_{k=0}^{N+1} c_k.
\end{equation}

The desired quadrature rule that integrates all $\varphi \in \Phi_D$ exactly and consists of $N = D$ nodes can be constructed by choosing either $i = 1$ or $i = 2$, and determining the nodes and weights as follows:
\begin{align}
	\label{eq:update2}
	X^{(K+1)}_N &= X^{(K+1)}_{N+1} \setminus \{ x_{k_i} \}, \\
	W^{(K+1)}_N &= \{v_k - \alpha_i c_k \mid k = 0, \dots, k_i-1, k_i+1, \dots, N+1 \}.
\end{align}
This rule has $N+1$ nodes and integrates the moments $\mu_j^{(K+1)}$ for $j = 0, \dots, D$ exactly. Note that, to include the case of two weights becoming zero simultaneously (e.g.\ the symmetric quadrature rules of~\cite{Bos2016b}), these sets can be implicitly defined as follows:
\begin{align}
	X^{(K+1)}_Q &= \left\{ x_k \mid x_k \in X^{(K+1)}_{N+1} \text{ and } v_k > \alpha_i c_k \right\}, \\
	W^{(K+1)}_Q &= \left\{ v_k - \alpha_i c_k \mid v_k \in W^{(K+1)}_{N+1} \text{ and } v_k > \alpha_i c_k \right\},
\end{align}
with $Q \leq N \leq D$. Without loss of generality, we assume that $Q = N$ throughout this article.

The correctness of this method follows from the fact that the first $D+1$ sample moments of the first $K$ samples are integrated exactly using the constructed quadrature rule after iteration $K$. Therefore by construction the following theorem is proved.

\begin{theorem}
	\label{thm:basiccase}
	Let $\mathcal{A}^{(K)}_N$ be a positive quadrature rule operator such that
	\begin{equation}
		\mathcal{A}^{(K)}_N \varphi_j = \mu^{(K)}_j, \text{ for $j = 0, \dots, D$},
	\end{equation}
	with $N = D$. Then after applying the procedure above, a positive quadrature rule operator $\mathcal{A}^{(K+1)}_N$ is obtained such that
	\begin{equation}
		\mathcal{A}^{(K+1)}_N \varphi_j = \mu^{(K+1)}_j, \text{ for $j = 0, \dots, D$}.
	\end{equation}
\end{theorem}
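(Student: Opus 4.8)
The plan is to verify that the explicit construction in the preceding paragraphs does exactly what the theorem claims, by tracking the two invariants \emph{positivity} and \emph{exact integration of the sample moments} through the two update steps. Concretely, I would first treat the ``add a node'' step \eqref{eq:update1}: starting from $\mathcal{A}^{(K)}_N$ with weights $w_k > 0$ satisfying $\sum_k \varphi_j(x_k) w_k = \mu^{(K)}_j$, I would check that the rescaled weights $v_k = \tfrac{K+1}{K+2} w_k$ together with the new weight $v_{N+1} = \tfrac{1}{K+2}$ assigned to $x_{N+1} = y_{K+1}$ remain strictly positive (immediate, since $\tfrac{K+1}{K+2} > 0$) and satisfy $\sum_{k=0}^{N+1} \varphi_j(x_k) v_k = \mu^{(K+1)}_j$ — this last identity is precisely the three-line computation already displayed in the excerpt, so I would just cite it. At this point $\mathcal{A}^{(K+1)}_{N+1}$ is a positive rule on $N+2$ nodes that is exact for $\Phi_D$.

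Second, I would handle the ``remove a node'' step. The key observation is that $V_D(X^{(K+1)}_{N+1})$ is $(D+1) \times (N+2) = (D+1) \times (D+2)$, hence has a nontrivial null vector $\mathbf{c}$, so for every $\alpha$ the perturbed weights $v_k - \alpha c_k$ still solve the Vandermonde system \eqref{eq:vandermonde} with right-hand side $\mu^{(K+1)}_j$. I would then argue that $\mathbf{c}$ has at least one strictly positive and at least one strictly negative component: since $\varphi_0$ is the nonzero constant, the first row of the null-space equation reads $\varphi_0 \sum_k c_k = 0$, so $\sum_k c_k = 0$, and since $\mathbf{c} \neq \mathbf{0}$ it cannot be all zeros, forcing components of both signs. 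Consequently $\alpha_1$ and $\alpha_2$ in \eqref{eq:alpha} are each defined over a nonempty index set. Choosing $\alpha = \alpha_i$ (either $i=1$ or $i=2$), I would verify that $v_k - \alpha_i c_k \geq 0$ for all $k$ with equality for at least one index $k_i$: for $i = 1$, if $c_k \le 0$ then $v_k - \alpha_1 c_k \ge v_k > 0$ since $\alpha_1 > 0$ (as $v_{k_1}, c_{k_1} > 0$); if $c_k > 0$ then $v_k - \alpha_1 c_k = c_k(v_k/c_k - \alpha_1) \ge 0$ by minimality of $\alpha_1$, with equality at $k = k_1$; the case $i = 2$ is symmetric. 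Dropping every node with zero weight (the implicitly defined $X^{(K+1)}_Q$, $W^{(K+1)}_Q$, which under the stated convention has $Q = N$) leaves a rule on $N+1 = D+1$ nodes with strictly positive weights that still satisfies $\mathcal{A}^{(K+1)}_N \varphi_j = \mu^{(K+1)}_j$ for $j = 0, \dots, D$, since removing nodes of zero weight changes neither side of \eqref{eq:keyproperty}.

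Finally I would assemble these two steps: the hypotheses of the theorem give a positive $\mathcal{A}^{(K)}_N$ exact on $\Phi_D$; step one produces a positive $\mathcal{A}^{(K+1)}_{N+1}$ exact on $\Phi_D$ with the correct moments $\mu^{(K+1)}_j$; step two trims it back to $N = D$ nodes while preserving both positivity and moment-exactness; this is exactly the asserted $\mathcal{A}^{(K+1)}_N$. I would also note for completeness that the procedure is well-posed — the null vector exists because of the dimension count, and the minima/maxima in \eqref{eq:alpha} are attained because they range over finite nonempty sets — so no existence caveat is needed.

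I do not expect a genuine obstacle here: the statement is explicitly flagged as ``proved by construction,'' so the proof is essentially bookkeeping. The one point deserving care — and the closest thing to a subtlety — is the sign argument showing $\mathbf{c}$ has components of both signs, which is what guarantees $\alpha_1$ and $\alpha_2$ exist; this rests entirely on the standing assumption that $\varphi_0$ is a nonzero constant. A secondary point worth a sentence is that one must confirm the perturbed vector $v - \alpha_i c$ still solves the moment system (it does, because $c$ is in the null space of the \emph{same} Vandermonde matrix that appears in \eqref{eq:keyproperty} for $\mu^{(K+1)}$), so that exactness is genuinely preserved and not merely positivity.
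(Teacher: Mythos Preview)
Your proposal is correct and follows exactly the paper's approach: the paper states that the theorem is ``proved by construction,'' and your write-up is simply a careful, explicit verification of the two steps (add a node via \eqref{eq:update1}, then remove one via the null-vector argument and \eqref{eq:alpha}) that the preceding text already lays out. The only substantive detail you add beyond the paper's exposition is the explicit check that $v_k - \alpha_i c_k \ge 0$ for all $k$, which the paper asserts without spelling out; your sign argument for $\mathbf{c}$ is identical to the one displayed just before \eqref{eq:update2}.
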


\begin{figure}[t]
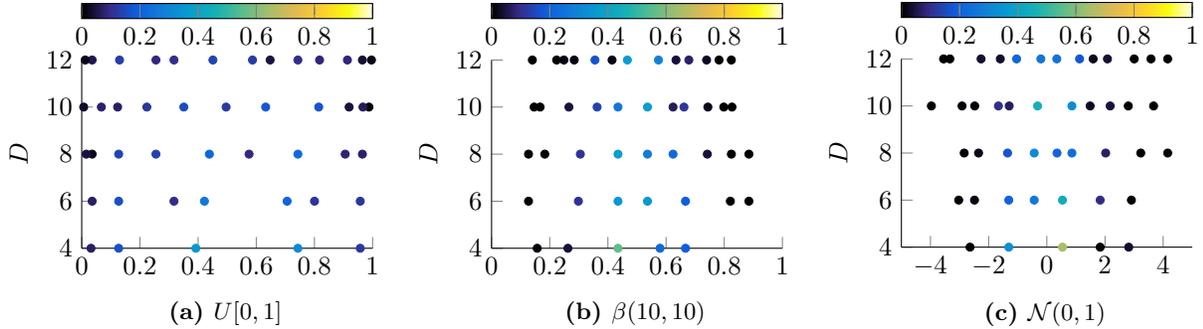

	\centering
	\begin{minipage}{.33\textwidth}
		\centering
		\includepgf{\textwidth}{.75\textwidth}{uniform.tikz}
		\subcaption{$U[0, 1]$}
	\end{minipage}%
	\begin{minipage}{.33\textwidth}
		\centering
		\includepgf{\textwidth}{.75\textwidth}{beta1010.tikz}
		\subcaption{$\beta(10, 10)$}
	\end{minipage}%
	\begin{minipage}{.33\textwidth}
		\centering
		\includepgf{\textwidth}{.75\textwidth}{normal.tikz}
		\subcaption{$\mathcal{N}(0, 1)$}
	\end{minipage}
	\caption{Examples of implicit quadrature rules for various degrees, using the same $10^5$ samples for each degree.}
	\label{fig:implicutqrule}
\end{figure}

\begin{figure}[t]
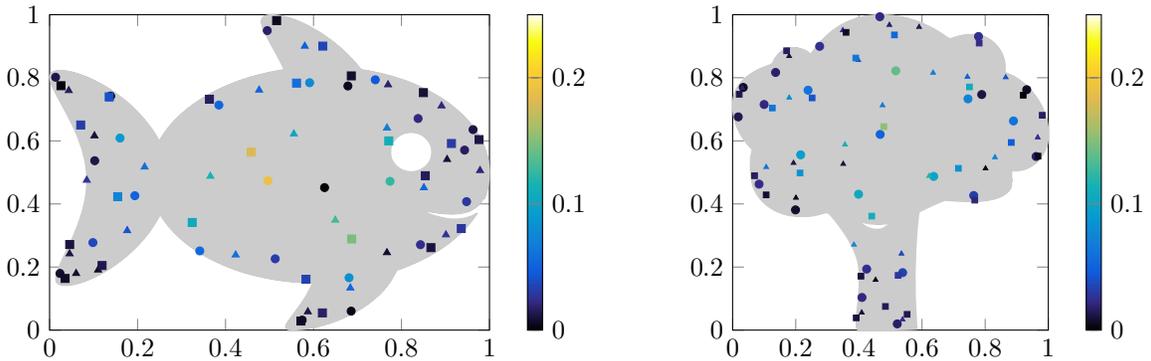

	\centering
	\begin{minipage}{.5\textwidth}
		\centering
		\includepgf{.9\textwidth}{.7\textwidth}{fish.tikz}
	\end{minipage}%
	\begin{minipage}{.5\textwidth}
		\centering
		\includepgf{.7\textwidth}{.7\textwidth}{tree.tikz}
	\end{minipage}
	\caption{Three implicit quadrature rules of 25 nodes (using different symbols) determined using the bivariate uniform distribution restricted to the gray area, using three different permutations of a set of $10^5$ samples.}
	\label{fig:logos-implicitqrule}
\end{figure}

For different sample sets, even when drawn from the same distribution, the procedure constructs different quadrature rules. If desired, this non-deterministic nature of the quadrature rule can be eradicated by using deterministic samplers, such as quasi Monte Carlo sequences~\cite{Caflisch1998}. These are not used in the quadrature rules constructed in this article, as these sequences are generally not straightforward to construct for distributions with a non-invertible cumulative distribution function. Another aspect of the algorithm that can create variation in the resulting quadrature rules, is the choice of the parameter $\alpha$. It is possible to incorporate knowledge about the integrand in the choice for $\alpha$ at each iteration, but in this article the smallest value is used, because it is assumed that we do not have a priori knowledge about the integrand.

The steps of the method are outlined in Algorithm~\ref{alg:implicitqrulemath} and examples of implicit quadrature rules obtained using sample sets drawn from well-known distributions are depicted in Figures~\ref{fig:implicutqrule}~and~\ref{fig:logos-implicitqrule}. In Figure~\ref{fig:implicutqrule}, the nodes and weights are shown for various polynomial degrees $D$, based on $K_\text{max} = 10^5$ samples drawn from several common univariate distributions. For the distributions with compact support, the nodes cluster at the boundaries of the domain. However, the nodes exhibit an irregular pattern upon increasing the degree, and determining a quadrature rule with a higher degree does not result into a nested rule (this will be addressed in Section~\ref{subsec:extending}). In the second example, nodal sets are generated in two dimensions on two different irregular domains, see Figure~\ref{fig:logos-implicitqrule}. This shows a major strength of the proposed implicit quadrature rule: it can be applied to arbitrary sample sets, including domains that are not simply connected, and positive weights are still guaranteed. Depending on the ordering of the samples in the set, different nodes and weights are obtained, indicating that the quadrature rules for these sets are not unique. It is generally not possible to obtain exactly the same quadrature rule for two permutations of the sample set, since choosing either $\alpha_1$ or $\alpha_2$ can be exploited to preserve only a single node in the rule. Theoretically this can be resolved by removing multiple nodes from the rule, as will be done in the next section, though it is often unfeasible to do so.

\begin{algorithm}[tb]
\caption{The implicit quadrature rule}
\label{alg:implicitqrulemath}
\begin{algorithmic}[1]
\Require Samples $\{y_0, \dots, y_{K_\text{max}}\}$, $\Phi_D = \linspan\{\varphi_0, \dots, \varphi_D\}$
\Ensure Positive quadrature rule $X_N = \{x_0, \dots, x_N\}$, $W_N = \{x_0, \dots, x_N\}$ with $N = D$

~

\State Initialize $X^{(D)}_N = \{y_0, \dots, y_D\}$
\State Initialize $W^{(D)}_N = \{1/(D+1), \dots, 1/(D+1)\}$

\For{$K = D, \dots, K_\text{max}-1$}
	\State \parbox{\fatwidth}{\textbf{Add node:}} $X^{(K+1)}_{N+1} \gets X^{(K)}_N \cup \{ y_{K+1} \}$
	\State \parbox{\fatwidth}{~} $W^{(K+1)}_{N+1} \gets (K+1)/(K+2) W^{(K)}_N \cup \{ 1/(K+2) \}$
	\State \parbox{\fatwidth}{\textbf{Update weights:}} Construct $V_D(X^{(K+1)}_{N+1})$
	\State \parbox{\fatwidth}{~} Determine (non-trivial) $\mathbf{c}$ such that $V_D(X^{(K+1)}_{N+1}) \mathbf{c} = \mathbf{0}$
	\State \parbox{\fatwidth}{~} $\alpha_1 \gets \min_k(v_k / c_k \mid c_k > 0)$, with $v_k \in W^{(K+1)}_{N+1}$
	\State \parbox{\fatwidth}{~} $\alpha_2 \gets \max_k(v_k / c_k \mid c_k < 0)$, with $v_k \in W^{(K+1)}_{N+1}$
	\State \parbox{\fatwidth}{\textbf{Choose:}} Either $\alpha \gets \alpha_1$ or $\alpha \gets \alpha_2$
	\State \parbox{\fatwidth}{\textbf{Remove node:}} $X^{(K+1)}_N \gets \left\{ x_k \mid x_k \in X^{(K+1)}_{N+1} \text{ and } w_k > \alpha c_k \right\}$
	\State \parbox{\fatwidth}{~} $W^{(K+1)}_N \gets \left\{ w_k - \alpha c_k \mid w_k \in W^{(K+1)}_{N+1} \text{ and } w_k > \alpha c_k \right\}$
\EndFor

\State \textbf{Return} $X^{(K_\text{max})}_N$, $W^{(K_\text{max})}_N$

\end{algorithmic}
\end{algorithm}

\subsection{Nested implicit rule}
\label{subsec:extending}
The approach of the previous section can be used to construct a quadrature rule given the number of basis vectors $D$ and a fixed number of samples $K$. For varying $D$ these quadrature rules are however not nested. In this section the algorithm is extended such that the constructed quadrature rules contain nodes that can be provided beforehand. By providing the nodes of an existing quadrature rule, a sequence of nested quadrature rules can be constructed.

The problem setting is as follows. Let $X_N$ be an indexed set of quadrature rule nodes and assume a desired number of basis vectors $D$ is specified, with $D \geq N$. The goal is to add $M$ nodes to $X_N$ in order to obtain a positive quadrature rule with nodes $X_{N+M}$ that exactly integrates all $\varphi \in \Phi_D$ exactly (so $X_N \subset X_{N+M}$). Note that in general all weights will differ, i.e.\ $W_N \not\subset W_{N+M}$. We desire to add a small number of nodes, thus $M$ to be small, but it is straightforward to observe that $M$ is bounded as follows:
\begin{equation}
	D \leq N+M \leq N+D+1.
\end{equation}
The first bound $D \leq N+M$ describes that a quadrature rule constructed with our algorithms does not integrate more basis functions exactly than its number of nodes. The second bound $N+M \leq N+D+1$ describes that it is possible to simply add a quadrature rule with $D+1$ nodes to the existing quadrature rule by setting all existing weights to 0. This is often not desired in applications, but provides a theoretical bound on the number of nodes obtained using our algorithms.

Algorithm~\ref{alg:implicitqrulemath} can be straightforwardly extended to incorporate nodes that are provided beforehand. The algorithm proceeds as usual, with the difference that nodes can only be removed if they were \emph{added} during the algorithm, but not if \emph{provided} in advance. This approach yields a sequence of nested quadrature rules, but is not optimal because it results into a quadrature rule with (possibly much) more nodes than necessary. Therefore the null space of the Vandermonde-matrix $V_D(X_N)$ is multidimensional. In such a case there are multiple nodes that can be removed together, even though removing the nodes individually yields a quadrature rule with negative weights. For example, removing two nodes from the rule yields a positive rule, but removing only one of the two yields a negative rule.

In this section the focus is therefore on the removal step of Algorithm~\ref{alg:implicitqrulemath}, which is extended to incorporate the removal of multiple nodes. By combining such an algorithm with Algorithm~\ref{alg:implicitqrulemath} the nested implicit quadrature rule is obtained.

Sequentially removing multiple nodes that result into a positive quadrature rule can result into intermediate quadrature rules with negative weights. Therefore the first step is to extend the removal procedure outlined in Section~\ref{subsec:basic} such that it supports negative weights. This is discussed in Section~\ref{subsubsec:negativeweightremoval}. The main algorithm that generalizes the approach of the basic implicit rule is presented and discussed in Section~\ref{subsubsec:recursiveremoval}. With this algorithm, the nested implicit quadrature rule follows readily, which is discussed in Section~\ref{subsubsec:nestedimplicitqrule}.

\subsubsection{Negative weight removal}
\label{subsubsec:negativeweightremoval}
The procedure from the previous section determines $\alpha_1$ and $\alpha_2$ that can be used for the removal of a node. However, the equations for $\alpha_1$ and $\alpha_2$ were derived assuming positive weights. In this section, similar equations will be derived without assuming positive weights.

Let $X_N$, $W_N$ be a quadrature rule with (possibly) negative weights. The goal is to remove one node to obtain $X_{N-1}$ and $W_{N-1}$ such that the resulting quadrature rule has positive weights and $\mathcal{A}_{N-1} \varphi_j = \mathcal{A}_N \varphi_j$ for $j = 0, \dots, N-1$. As introduced before, let $V_{N-1}(X_N)$ be the respective $N \times (N+1)$ Vandermonde-matrix and let $\mathbf{c} \in \mathbb{R}^{N+1}$ be a non-trivial null vector of that matrix. The goal is to have only positive weights, hence with the same reasoning as before, we obtain the following bound:
\begin{equation}
	w_k - \alpha c_k \geq 0, \text{ for all $k$ and a certain $\alpha$}.
\end{equation}
This translates into two cases:
\begin{equation}
	\alpha~\begin{cases}
		\geq w_k / c_k & \text{for all $k$ with $c_k < 0$}, \\
		\leq w_k / c_k & \text{for all $k$ with $c_k > 0$}.
	\end{cases}
\end{equation}
Hence the following bounds should hold for any such $\alpha$:
\begin{gather}
	\alpha_\text{min} \leq \alpha \leq \alpha_\text{max}, \text{ with } \\
	\label{eq:alphamin_alphamax}
	\alpha_\text{min} = \max_k \left( \frac{w_k}{c_k} \mid c_k < 0 \right) \eqqcolon \frac{w_{k_\text{min}}}{c_{k_\text{min}}}, \\
	\alpha_\text{max} = \min_k \left( \frac{w_k}{c_k} \mid c_k > 0 \right) \eqqcolon \frac{w_{k_\text{max}}}{c_{k_\text{max}}}.
\end{gather}
Such $\alpha$ does not necessarily exist, but if it does, both $\alpha = \alpha_\text{min}$ or $\alpha = \alpha_\text{max}$ can be used to remove either the node $x_{k_\text{min}}$ or $x_{k_\text{max}}$ from the rule (as their weight becomes 0). The case with only positive weights (which was considered in Section~\ref{subsec:basic}) fits naturally in this, with $\alpha_1 = \alpha_\text{max}$ and $\alpha_2 = \alpha_\text{min}$. If all weights are positive, it is evident that $\alpha_\text{min} < 0 < \alpha_\text{max}$.

Even a stronger, less trivial result holds: if $\alpha_\text{max} < \alpha_\text{min}$, then \emph{no} node exists that results into a positive quadrature rule after removal and if $\alpha_\text{min} < \alpha_\text{max}$, there exist \emph{exactly two} nodes such that removing one of the two results into a quadrature rule with positive weights. In other words, determining $\alpha_\text{min}$ and $\alpha_\text{max}$ as above yields all possible nodes that can be removed resulting into a positive quadrature rule. The details are discussed in the proof of the following lemma.

\begin{lemma}
	\label{lmm:all1removals}
	Let $X_N$, $W_N$ be a quadrature rule integrating all $\varphi \in \Phi_N$ exactly. The following statements are equivalent:
	\begin{enumerate}
		\item \label{item:1} $\alpha_\text{min} \leq \alpha_\text{max}$.
		\item \label{item:2} There exists an $x_{k_0} \in X_N$ such that the quadrature rule with nodes $X_N \setminus \{x_{k_0}\}$ that integrates all $\varphi \in \Phi_{N-1}$ exactly has non-negative weights.
		\item \label{item:3} Let any $x_{k_0} \in X_N$ be given such that the quadrature rule with nodes $X_N \setminus \{x_{k_0}\}$ that integrates all $\varphi \in \Phi_{N-1}$ exactly has non-negative weights. Then the weights of this rule, say $W_{N-1}$, are formed by
		\begin{equation}
			W_{N-1} = \{ w_k - \alpha c_k \mid k \neq k_0 \},
		\end{equation}
		where $c_k$ are the elements of a null vector of $V_{N-1}(X_N)$ and either $\alpha = \alpha_\text{min}$ or $\alpha = \alpha_\text{max}$.
	\end{enumerate}
\end{lemma}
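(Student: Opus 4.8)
The plan is to prove the chain of implications $\ref{item:1} \Rightarrow \ref{item:3} \Rightarrow \ref{item:2} \Rightarrow \ref{item:1}$, with the bulk of the work in the first implication. Throughout, fix a non-trivial null vector $\mathbf{c} = \trans{(c_0,\dots,c_N)}$ of $V_{N-1}(X_N)$; note that since $\varphi_0$ is constant, $\sum_k c_k = 0$, so $\mathbf{c}$ has at least one strictly positive and at least one strictly negative entry, and hence $\alpha_\text{min}$ and $\alpha_\text{max}$ are both well-defined (each involves a nonempty minimum/maximum). The null space is exactly one-dimensional: $V_{N-1}(X_N)$ is an $N \times (N+1)$ matrix and it has rank $N$ because $X_N$ supports a quadrature rule exact on $\Phi_N \supset \Phi_{N-1}$ with all nodes distinct — more carefully, a rank deficiency would force a nonzero linear combination of the rows to vanish on all nodes, i.e. a nonzero $\varphi \in \Phi_{N-1}$ with $N+1$ zeros, impossible for a polynomial space of that dimension in the univariate setting (and assumed to hold by the standing hypothesis that the rule is exact and well-posed). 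Consequently $\mathbf{c}$ is unique up to scaling, which is what makes the ``exactly two'' count clean.

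For $\ref{item:1} \Rightarrow \ref{item:3}$: suppose $\alpha_\text{min} \le \alpha_\text{max}$ and let $x_{k_0}$ be any node whose removal admits a non-negative rule $W_{N-1}$. Extend $W_{N-1}$ to a vector $\mathbf{w}'$ on all of $X_N$ by putting a $0$ in coordinate $k_0$. Both $\mathbf{w}'$ and the original weight vector $\mathbf{w}$ solve $V_{N-1}(X_N)\,\mathbf{x} = \trans{(\mu_0,\dots,\mu_{N-1})}$ (using $\mathcal{A}_{N-1}\varphi_j = \mathcal{A}_N \varphi_j$ for $j \le N-1$), so their difference lies in the null space, i.e. $\mathbf{w} - \mathbf{w}' = \alpha \mathbf{c}$ for some scalar $\alpha$. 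Reading off coordinate $k_0$ gives $w_{k_0} = \alpha c_{k_0}$, so either $c_{k_0} = 0$ and $w_{k_0}=0$ (a degenerate case I will note can be folded in, or handled by the same algebra with $\alpha$ pinned by another coordinate), or $\alpha = w_{k_0}/c_{k_0}$. Non-negativity of $\mathbf{w}' = \mathbf{w} - \alpha\mathbf{c}$ then forces $\alpha \ge w_k/c_k$ whenever $c_k < 0$ and $\alpha \le w_k/c_k$ whenever $c_k > 0$, i.e. $\alpha_\text{min} \le \alpha \le \alpha_\text{max}$; combined with $\alpha = w_{k_0}/c_{k_0}$ and the definitions of $\alpha_\text{min}, \alpha_\text{max}$ as the $\max$ (resp.\ $\min$) over those ratios, the only way a ratio can lie strictly between is excluded — the zero coordinate $k_0$ of $\mathbf{w}'$ must be one at which the bound is tight, forcing $\alpha \in \{\alpha_\text{min}, \alpha_\text{max}\}$ and the corresponding $k_0 \in \{k_\text{min}, k_\text{max}\}$. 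This simultaneously gives the formula $W_{N-1} = \{w_k - \alpha c_k \mid k \neq k_0\}$ and the ``exactly two candidates'' count (the two being $x_{k_\text{min}}$ and $x_{k_\text{max}}$, which coincide only in the degenerate simultaneous-vanishing case).

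The implication $\ref{item:3} \Rightarrow \ref{item:2}$ is essentially definitional, but to make it non-vacuous I would instead argue $\ref{item:1} \Rightarrow \ref{item:2}$ directly by construction: when $\alpha_\text{min} \le \alpha_\text{max}$, set $\alpha = \alpha_\text{max}$ (say), define weights $w_k - \alpha c_k$, observe they are all $\ge 0$ by the case analysis above and that coordinate $k_\text{max}$ is exactly $0$, so dropping $x_{k_\text{max}}$ yields the desired rule (exactness on $\Phi_{N-1}$ is immediate since we subtracted a null-space element). Finally $\ref{item:2} \Rightarrow \ref{item:1}$: given such an $x_{k_0}$ and non-negative $W_{N-1}$, run the difference-of-solutions argument of the previous paragraph to get $\alpha = w_{k_0}/c_{k_0}$ with $\alpha_\text{min} \le \alpha \le \alpha_\text{max}$; in particular $\alpha_\text{min} \le \alpha_\text{max}$. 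The main obstacle is the bookkeeping around the degenerate cases — $c_{k_0} = 0$, and $\alpha_\text{min} = \alpha_\text{max}$ with two weights vanishing at once — which need to be phrased so the ``exactly two nodes'' statement degrades gracefully to ``exactly one'' without breaking the argument; I would handle this by consistently working with the implicit (inequality) definitions of the removed node set, exactly as the paper does just before the lemma.
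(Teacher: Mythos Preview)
Your proof is correct and follows essentially the same approach as the paper: both arguments hinge on the observation that the difference between the original weight vector and any reduced weight vector (padded with a zero) lies in the null space of $V_{N-1}(X_N)$, and then show that the associated scalar must coincide with $\alpha_\text{min}$ or $\alpha_\text{max}$ by extremality of the ratio $w_{k_0}/c_{k_0}$. The paper organizes the cycle as $\ref{item:1}\to\ref{item:2}\to\ref{item:3}\to\ref{item:1}$ and constructs the null vector so that $\alpha=1$, whereas you fix $\mathbf{c}$ once (invoking one-dimensionality of the kernel) and solve for $\alpha$; these are the same computation, and you also correctly flag the vacuity issue in $\ref{item:3}\Rightarrow\ref{item:2}$ that the paper's cycle quietly sidesteps.
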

\begin{proof}
	The proof consists of three parts: \ref{item:1} $\to$ \ref{item:2} $\to$ \ref{item:3} $\to$ \ref{item:1}:
	\begin{enumerate}
		\item[\hypersetup{hidelinks}$(\ref{item:1} \to \ref{item:2})$] Proving \ref{item:1} $\to$ \ref{item:2} follows immediately from the removal step outlined above (see \eqref{eq:alphamin_alphamax}).

		\item[\hypersetup{hidelinks}$(\ref{item:2} \to \ref{item:3})$] Suppose \ref{item:2} holds and let $x_{k_0}$ be given. Without loss of generality assume $k_0 = N$. Let $W_{N-1}$ be the weights of the quadrature rule nodes $X_{N-1} = X_N \setminus \{x_N\}$ and let $w^{(N)}_k \in W_N$ and $w^{(N-1)}_k \in W_{N-1}$. It holds that the nodes $X_N$ and weights $W_N$ form a quadrature rule that integrates all $\varphi \in \Phi_N$ exactly and that the nodes $X_{N-1}$ and weights $W_{N-1}$ form a quadrature rule that integrates all $\varphi \in \Phi_{N-1}$ exactly. Therefore for $j = 0, \dots, N-1$ the following holds:
		\begin{equation}
			\sum_{k=0}^N \varphi_j(x_k) w^{(N)}_k = \sum_{k=0}^{N-1} \varphi_j(x_k) w^{(N-1)}_k,
		\end{equation}
		so for these $j$ it follows that
		\begin{equation}
			\sum_{k=0}^{N-1} \varphi_j(x_k) (w^{(N)}_k - w^{(N-1)}_k) + \varphi_j(x_N) w^{(N)}_N = 0.
		\end{equation}
		Hence the vector $\mathbf{c} \in \mathbb{R}^{N+1}$ with elements $c_k = w^{(N)}_k - w^{(N-1)}_k$ (and $c_N = w^{(N)}_N$) is a null vector of $V_{N-1}(X_N)$. Then it follows that $\alpha = 1$. Without loss of generality, assume that $c_{k_0} \neq 0$.

		The remainder of this part consists of demonstrating that either $\alpha = \alpha_\text{min}$ or $\alpha = \alpha_\text{max}$. Assume $c_{k_0} > 0$ (with $k_0 = N$). It holds that $w^{(N)}_{k_0} = c_{k_0}$ and $w^{(N)}_k \geq c_k$ for all other $k$. For all $k$ with $c_k > 0$ (including $k_0$), we therefore obtain
		\begin{equation}
			\frac{w^{(N)}_k}{c_k} \geq 1.
		\end{equation}
		Equality is attained at $k = k_0$, hence $1 = \min ( w^{(N)}_k / c_k \mid c_k > 0 ) = \alpha_\text{max}$. In a similar way it can be demonstrated that if $c_{k_0} < 0$, we have $1 = \max ( w^{(N)}_k / c_k \mid c_k < 0 ) = \alpha_\text{min}$, concluding this part of the proof.

		\item[\hypersetup{hidelinks}$(\ref{item:3} \to \ref{item:1})$] Suppose \ref{item:3} holds and let the weights be given as in the lemma. Let $\alpha_\text{min}$, $\alpha_\text{max}$, $k_\text{min}$ and $k_\text{max}$ be given. By definition of $\alpha_\text{max}$, it holds that $c_{k_\text{max}} > 0$ (see Equation~\eqref{eq:alphamin_alphamax}). So if $\alpha \geq \alpha_\text{max}$, then $w_{k_\text{max}} \leq \alpha c_{k_\text{max}}$. So to have positive weights, we must have $\alpha \leq \alpha_\text{max}$.

		Similarly we have that $c_{k_\text{min}} < 0$ and therefore if $\alpha \leq \alpha_\text{min}$, it holds that $w_{k_\text{min}} \leq \alpha c_{k_\text{min}}$. So to have positive weights, we must have $\alpha \geq \alpha_\text{min}$.

		If there exists an $\alpha$ such that $\alpha_\text{min} \leq \alpha$ and $\alpha \leq \alpha_\text{max}$, it must hold that $\alpha_\text{min} \leq \alpha_\text{max}$. \qedhere
	\end{enumerate}
\end{proof}
The lemma demonstrates that $\alpha_\text{min}$ and $\alpha_\text{max}$ from \eqref{eq:alphamin_alphamax} can be used to determine whether there exists a node that yields a positive quadrature rule after removal (i.e.\ if $\alpha_\text{min} \leq \alpha_\text{max}$) and if such a node exists, either $\alpha_\text{min}$ or $\alpha_\text{max}$ can be used to determine it (by determining $k_0$ as in the proof). If $\alpha_\text{max} > \alpha_\text{min}$, no such node exists, but this is not an issue, since the algorithm to construct quadrature rules discussed in this article does by construction not end up in this case.

\subsubsection{Removal of multiple nodes}
\label{subsubsec:recursiveremoval}
Let $X_N$ and $W_N$ form a positive quadrature rule. In this section, the goal is to determine all subsets of $M$ nodes that have one specific property in common: removing those $M$ nodes results in a positive quadrature rule of $N+1-M$ nodes that exactly integrates all $\varphi \in \Phi_{N-M}$. We call a subset with this property an $M$-removal. Hence in the previous section a procedure has been presented to determine all 1-removals.

Lemma~\ref{lmm:all1removals} is the main ingredient for deriving all $M$-removals. The idea boils down to the following. Let an $M$-removal be given, say $(q_1, \dots, q_M) \subset X_N$. If the first $M-1$ nodes from this $M$-removal are removed, the $M$th node $q_M$ can be determined straightforwardly using $\alpha_\text{min}$ or $\alpha_\text{max}$ from \eqref{eq:alphamin_alphamax}. There are two possible values of $\alpha$ (namely either $\alpha_\text{min}$ or $\alpha_\text{max}$), hence there exists a second node, say $\hat{q}_M$, such that $(q_1, \dots, q_{M-1}, \hat{q}_M)$ is also an $M$-removal. The order in which the nodes are removed is irrelevant, so each node $q_k$ can be replaced in this way by a different node $\hat{q}_k$ resulting in a valid $M$-removal, i.e.\ a set of $M$ nodes that can be removed while preserving positive weights and obtaining a quadrature rule that exactly integrates all $\varphi \in \Phi_{N-M}$.

We denote the procedure of obtaining a different $M$-removal from an existing one by the operator $F: {[X_N]}^M \to {[X_N]}^M$, where ${[X_N]}^M$ denotes the set of all $M$-subsets of $X_N$. If $(q_1, \dots, q_M)$ is an $M$-removal, applying $F$ yields the $M$-removal $(q_1, \dots, q_{M-1}, \hat{q}_M)$. Such an operator is well-defined, since Lemma~\ref{lmm:all1removals} prescribes that there exist exactly two $M$-removals whose first $M-1$ elements equal $q_1, \dots, q_{M-1}$. Notice that the operator $F$, which depends on the nodes and weights of the quadrature rule, can be computed by determining $\alpha_\textrm{min}$ and $\alpha_\textrm{max}$ from Lemma~\ref{lmm:all1removals} after removal of $q_1, \dots, q_{M-1}$.

By permuting the $M$-removal before applying $F$, one $M$-removal yields (up to a permutation at most) $M$ other $M$-removals. These $M$-removals can be considered in a similar fashion and recursively more $M$-removals can be determined. This procedure yields all $M$-removals, which is demonstrated in the following lemma.

\begin{figure}
	\centering
	\includegraphics{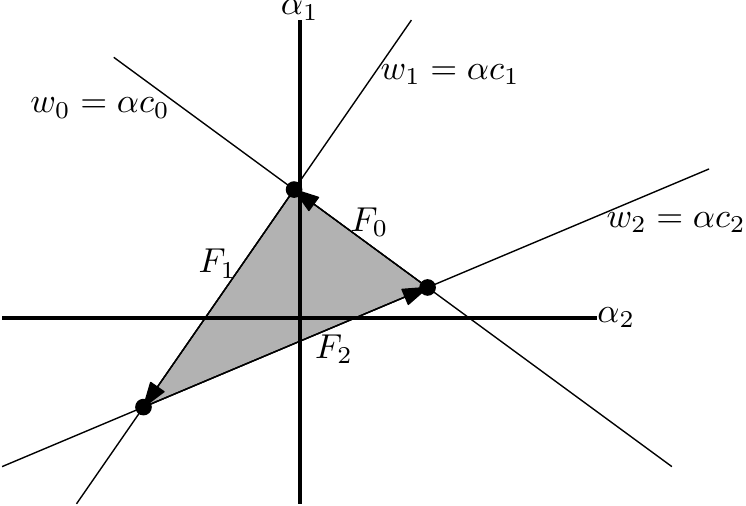}
	\caption{Graphical sketch of the simplex describing the removal of two nodes from a quadrature rule of three nodes. The gray area describes the simplex where all values of $(\alpha_1, \alpha_2)$ yield positive weights. The operator $F_k$ (see proof of Lemma~\ref{lmm:allMremovals}) can be used to traverse the boundary of the simplex.}
	\label{fig:M-removal}
\end{figure}
\begin{lemma}
	\label{lmm:allMremovals}
	Let $(q_1, \dots, q_M)$ and $(s_1, \dots, s_M)$ be any two different $M$-removals of the positive quadrature rule $X_N$, $W_N$. Let the operator $F: {[X_N]}^M \to {[X_N]}^M$, as described in the text, be such that
	\begin{equation}
		F(q_1, \dots, q_{M-1}, q_M) = (q_1, \dots, q_{M-1}, \hat{q}_M),
	\end{equation}
	for a given $M$-removal $(q_1, \dots, q_M)$, i.e.\ it replaces $q_M$ by $\hat{q}_M$ such that $(q_1, \dots, q_{M-1}, \hat{q}_M)$ is an $M$-removal. Then there exists a finite number of permutations $\sigma_1, \dots, \sigma_n$ such that
	\begin{equation}
		(\sigma_1 \circ F \circ \sigma_2 \circ F \circ \cdots \circ F \circ \sigma_{n-1} \circ F \circ \sigma_n) (q_1, \dots, q_M) = (s_1, \dots, s_M).
	\end{equation}
\end{lemma}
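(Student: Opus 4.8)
The plan is to show that the set of all $M$-removals, viewed as vertices of a graph in which two $M$-removals are joined by an edge when one is obtained from the other by a permutation followed by a single application of $F$, is connected. Since the claimed formula is exactly a statement that any two $M$-removals are joined by such a path, connectedness of this graph is all that is needed. I would set up the geometric picture suggested by Figure~\ref{fig:M-removal}: removing $M$ nodes sequentially introduces free parameters $\alpha_1, \dots, \alpha_M$ (one per removal step, as in Lemma~\ref{lmm:all1removals}), and the constraint that \emph{all} intermediate and final weights be non-negative carves out a closed convex polytope $P \subset \mathbb{R}^M$ (an $M$-simplex in the generic case). Each facet of $P$ corresponds to one weight vanishing, i.e.\ to one node being removed; a vertex of $P$ corresponds to $M$ weights simultaneously vanishing, i.e.\ to a complete $M$-removal. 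So $M$-removals are in bijection with vertices of $P$, and the operator $F$ (applied after a permutation that fixes which of the $M$ removed nodes is allowed to vary) is precisely the operation of sliding along an edge of $P$ from one vertex to an adjacent one.

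The key steps, in order, are: (i) formalize the correspondence between $M$-removals and vertices of the polytope $P$ — here I would use \eqref{eq:keyproperty} and the null-vector construction of Lemma~\ref{lmm:all1removals} iteratively, noting that after removing $q_1, \dots, q_{j-1}$ the remaining rule still integrates $\Phi_{N-j+1}$ exactly, so the hypotheses of Lemma~\ref{lmm:all1removals} are met at each stage; (ii) show that the two $M$-removals $(q_1,\dots,q_{M-1},q_M)$ and $(q_1,\dots,q_{M-1},\hat q_M)$ produced by $F$ are exactly the two endpoints of the edge of $P$ incident to the facet-intersection determined by $q_1,\dots,q_{M-1}$ — this is essentially the "exactly two" clause of Lemma~\ref{lmm:all1removals}, which says the one remaining free parameter has precisely two admissible extreme values $\alpha_{\text{min}}$ and $\alpha_{\text{max}}$; (iii) observe that pre-composing $F$ with an arbitrary permutation $\sigma$ lets us choose \emph{which} of the $M$ currently-removed nodes plays the role of "$q_M$", hence lets us traverse any edge of $P$ emanating from the current vertex; (iv) invoke the standard fact that the vertex-edge graph of a (bounded, nonempty) polytope is connected, so there is a finite edge-path from the vertex corresponding to $(q_1,\dots,q_M)$ to the vertex corresponding to $(s_1,\dots,s_M)$; (v) translate this edge-path back into a composition $\sigma_1 \circ F \circ \sigma_2 \circ \cdots \circ F \circ \sigma_n$, reading off the permutations from which coordinate is varied at each step, and terminating with a final permutation $\sigma_1$ to match the prescribed ordering of $(s_1,\dots,s_M)$.

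The main obstacle I anticipate is handling degeneracies: the polytope $P$ need not be a simplex, a vertex may lie on more than $M$ facets (more than $M$ weights vanishing at once — the "symmetric rule" phenomenon already flagged after \eqref{eq:update2}), and an edge could in principle be unbounded if no admissible $\alpha$ exists on one side. For the unboundedness point I would argue that $P$ is genuinely bounded because $\varphi_0 \equiv 1$ forces $\sum_k c_k = 0$ for every null vector, so each one-parameter family has both an upper and a lower obstruction once we are at a positive rule, exactly as in the $\alpha_{\text{min}} < 0 < \alpha_{\text{max}}$ remark of Section~\ref{subsubsec:negativeweightremoval}; this keeps every facet-slice bounded and hence $P$ a genuine polytope. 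For the combinatorial degeneracies, the cleanest route is to phrase step (iv) not via $P$ directly but via the abstract graph $G$ whose vertices are $M$-removals and whose edges are $F$-moves (over all permutations), and to prove $G$ connected by downward induction on $M$: a connectivity argument on $(M-1)$-removals of the rule obtained after one removal, glued along the shared first $M-1$ coordinates, sidesteps the need to understand the full face lattice of $P$. I would present the polytope picture as the guiding intuition (and to justify Figure~\ref{fig:M-removal}) but carry out the actual induction combinatorially, since that is robust to non-simplicial vertices; the base case $M=1$ is just the "exactly two nodes" half of Lemma~\ref{lmm:all1removals}.
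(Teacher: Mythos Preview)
Your proposal is correct and follows essentially the same route as the paper: identify $M$-removals with vertices of the polytope $\{\boldsymbol\alpha \in \mathbb{R}^M : w_k - \sum_{j=1}^M \alpha_j c_k^j \geq 0 \text{ for all } k\}$ built from $M$ independent null vectors of $V_{N-M}(X_N)$, interpret permutation-then-$F$ moves as traversals of its edges, and conclude via connectedness of the vertex--edge graph. Your treatment of boundedness and of combinatorial degeneracies (and the inductive backup) is in fact more careful than the paper's, which simply calls the polytope a ``simplex'' and leaves the non-generic case implicit.
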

\begin{proof}
	Let $F_k: {[X_N]}^M \to {[X_N]}^M$ be the operator that firstly permutes $q_k$ to the end of the $M$-removal and secondly applies $F$, i.e.
	\begin{equation}
		F_k(q_1, \dots, q_M) = F(q_1, \dots, q_{k-1}, q_{k+1}, \dots, q_M, q_k) = (q_1, \dots, q_{k-1}, q_{k+1}, \dots, q_M, \hat{q}_k).
	\end{equation}
	Notice that $F_k = F \circ \pi_k$, where $\pi_k$ denotes the permutation that appends $q_k$. Hence if there exist $k_1, \dots, k_n$ such that $F_{k_1} \circ \cdots \circ F_{k_n} (q_1, \dots, q_M)$ equals $(s_1, \dots, s_M)$ up to a permutation, the proof is done.

	Consider $W_N = \{w_0, \dots, w_N\}$ and let $V_{N-M}(X_N)$ be the Vandermonde-matrix with respect to this quadrature rule. This is an $(N-M+1) \times (N+1)$-matrix, so there exist $M$ linearly independent null vectors $\mathbf{c}^{1}, \dots, \mathbf{c}^{M} \in \mathbb{R}^{N+1}$. We use the following notation for the vector $\mathbf{c}^k$:
	\begin{equation}
		\mathbf{c}^k = \trans{(c_0^k, \dots, c_N^k)}.
	\end{equation}
	Let $\alphab = (\alpha_1, \dots, \alpha_M)$ be an $M$-tuple and consider the following set:
	\begin{equation}
		S = \left\{ \alphab = (\alpha_1, \dots, \alpha_M) \mathrel{\Big|} \forall k = 0, \dots, N : w_k - \sum_{j=1}^M \alpha_j c_k^j \geq 0 \right\}.
	\end{equation}

	The set $S$ is a closed simplex, as it is formed by a finite number of linear inequalities. Moreover it is non-empty, as $(0, \dots, 0) \in S$ (this follows from the fact that $w_k \geq 0$ for all $k$). 
	
	The boundary of $S$, i.e.\ $\partial S$, is of special interest. If $(\alpha_1, \dots, \alpha_M) \in \partial S$, it holds that $w_k - \sum_{k=1}^M \alpha_k c_k = 0$ for at least one $k$. At vertices of the simplex, the highest number of weights, namely $M$, is $0$. The operator $F_k$ can be used to traverse the vertices of the simplex. The simplex $S$ for determining a 2-removal for a quadrature rule of 3 nodes is sketched in Figure~\ref{fig:M-removal}.
	
	Consider an $M$-removal $\mathbf{q} = (q_1, \dots, q_M)$. There is exactly one $M$-tuple $\boldsymbol\alpha = (\alpha_1, \dots, \alpha_M)$ resulting into the removal of these nodes. These $\alpha$'s coincide with a vertex of simplex $S$. Applying $F_k$ to $\mathbf{q}$ yields a different $M$-tuple. These two $M$-tuples are connected through an edge of the simplex. Due to Lemma~\ref{lmm:all1removals} all $M$-tuples that are connected to $\boldsymbol\alpha$ through an edge can be found. Therefore, for a given vertex, the operator $F_k$ yields all connected vertices and can be used to traverse the boundary of the simplex. This concludes the proof.
\end{proof}
The statement of the lemma is constructive: given a single $M$-removal, all $M$-removals can be found. By repetitively constructing a 1-removal using the methods from Section~\ref{subsubsec:negativeweightremoval}, an initial $M$-removal can be obtained (which is a vertex of the simplex $S$ discussed in the proof). Lemma~\ref{lmm:allMremovals} assures that any other $M$-removal can be reached from this removal. An outline of this procedure can be found in Algorithm~\ref{alg:massiveremoval}. The computational cost of calculating the null vectors can be alleviated by decomposing $V_{N-M}(X_{N-M})$ (e.g.\ using an LU or QR decomposition) once and computing the null vectors of $V_{N-M}(X_{N-M+1})$ in the loop by reusing this decomposition.

The time complexity of this algorithm is $\mathcal{O}(Z \log Z + Z (N-M)^3)$, where $Z$ is the number of $M$-removals. Here, the term $Z \log Z$ originates from storing all visited $M$-removals efficiently using a binary search tree (which results into $Z$ lookups that scale with $\log Z$) and the term $Z (N-M)^3$ is obtained by factorizing $V_{N-M}(X_{N-M})$ and repeatedly computing the null vector of $V_{N-M}(X_{N-M+1})$ using this factorization. Algorithm~\ref{alg:massiveremoval} always terminates, as the number of subsets of $M$ nodes is strictly bounded. Combining this with the proof of Lemma~\ref{lmm:allMremovals} proves the following theorem.

\begin{theorem}
	On termination, Algorithm~\ref{alg:massiveremoval} returns all $M$-removals of the positive quadrature rule $X_N$, $W_N$.
\end{theorem}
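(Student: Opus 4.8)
The plan is to read Algorithm~\ref{alg:massiveremoval} as a graph traversal and to use Lemma~\ref{lmm:allMremovals} for the connectivity that makes such a traversal exhaustive. I would introduce the graph $G$ whose vertex set $V(G)$ is the collection of $M$-removals of the rule $X_N$, $W_N$ --- a subset of ${[X_N]}^M$, hence of size at most $\binom{N+1}{M}$ and in particular finite --- and whose edges join two $M$-removals precisely when one is obtained from the other by some $F_k$, equivalently when the two $M$-subsets differ in exactly one node. The first point to nail down is that this edge relation is symmetric, so that $G$ is an undirected graph: restricted to the $M$-removals sharing a fixed first $M-1$ coordinates, $F$ merely interchanges the two admissible last coordinates supplied by Lemma~\ref{lmm:all1removals} (one of which is the current node), so $F\circ F$ acts as the identity there; hence every $F_k$ is reversible by an $F_j$, and every vertex has degree at most $M$.

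Next I would establish the two invariants of the traversal. \emph{Soundness}: every element inserted into the working collection (the binary search tree of discovered removals referred to in the complexity discussion) is a genuine $M$-removal. The initial element is produced by applying the $1$-removal step of Section~\ref{subsubsec:negativeweightremoval} $M$ times in succession; at each of these steps the current rule is positive, so $\alpha_\text{min}<0<\alpha_\text{max}$ and in particular $\alpha_\text{min}\le\alpha_\text{max}$, whence Lemma~\ref{lmm:all1removals} yields a single-node removal giving a positive rule exact on one fewer basis element --- after $M$ such steps this is by definition an $M$-removal. Every later inserted element is the image of an already-discovered $M$-removal under some $F_k$, which is an $M$-removal by construction of $F$. \emph{Closure}: when the traversal processes a discovered vertex $\mathbf q$, it forms $F_1(\mathbf q),\dots,F_M(\mathbf q)$, i.e.\ all $G$-neighbours of $\mathbf q$, and inserts those not yet present; therefore on termination the discovered set is closed under taking $G$-neighbours.

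Finally I would combine these with connectivity: a nonempty set of vertices of a graph that is closed under passing to neighbours is a union of connected components, and by Lemma~\ref{lmm:allMremovals} $G$ is connected, so the discovered set equals $V(G)$ --- exactly the set of all $M$-removals, which is what the algorithm returns. Termination needs no extra work: the membership test in the search tree ensures each vertex is inserted at most once, there are at most $\binom{N+1}{M}$ vertices, and each is expanded exactly once (at cost dominated by computing $M$ null vectors), which also recovers the $\mathcal{O}(Z\log Z + Z(N-M)^3)$ estimate with $Z=|V(G)|$.

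The main obstacle is not the connectivity --- that is exactly Lemma~\ref{lmm:allMremovals} --- but the careful bookkeeping around the neighbourhood structure: that ranging $k$ over $1,\dots,M$ genuinely enumerates all ways of singling out one node to replace, that each such choice produces a well-defined neighbour (Lemma~\ref{lmm:all1removals} gives exactly two admissible $M$-tuples, one being $\mathbf q$, so ``the other one'' is unambiguous), and that the correspondence between the vertices of the simplex $S$ from the proof of Lemma~\ref{lmm:allMremovals} and the $M$-removals is not spoiled by degenerate vertices at which more than $M$ weights vanish simultaneously; under the standing assumption $Q=N$ this degeneracy does not occur, and otherwise it is handled as in the symmetric-rule case of~\cite{Bos2016b}.
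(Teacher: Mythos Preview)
Your proposal is correct and follows essentially the same approach as the paper: the paper's proof is a one-liner observing that termination holds because the number of $M$-subsets is finite and that correctness then follows from (the proof of) Lemma~\ref{lmm:allMremovals}. Your write-up simply makes explicit the graph-traversal interpretation that is implicit in the paper's appeal to the simplex structure of that lemma, together with the soundness and closure invariants needed to turn it into a complete argument.
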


\begin{algorithm}[t]
\caption{Removing multiple nodes}
\label{alg:massiveremoval}
\begin{algorithmic}[1]
\Require Positive quadrature rule $X_N$, $W_N$, integer $M$ with $1 \leq M < N+1$
\Ensure All $M$-removals of $X_N$, $W_N$

~

\State Construct $V_{N-M}(X_N)$
\State Determine $M$ independent null vectors $\mathbf{c}^k$ of $V_{N-M}(X_N)$
\State Construct an $M$-removal, say $\mathbf{q} \gets (q_1, \dots, q_M) \subset X_N$ (e.g.\ by repeatedly using Lemma~\ref{lmm:all1removals})
\State $I \gets \{ \mathbf{q} \}$, the set containing all \emph{queued} removals
\State $R \gets \emptyset$, the set containing all \emph{processed} removals
\While {$I \neq \emptyset$}
	\State Get the first removal from $I$, say $\mathbf{q} \gets (q_1, \dots, q_M) \subset X_N$
	\State Remove $\mathbf{q}$ from $I$, i.e.\ $I \gets I \setminus \{ \mathbf{q} \}$.
	\For{$i = 1, \dots, M$}
		\State Construct $X_{N-M+1}$ and $W_{N-M+1}$ by removing $(q_1, \dots, q_{i-1}, q_{i+1}, \dots, q_M)$
		\State Determine $\mathbf{c}$ such that $V_{N-M}(X_{N-M+1}) \mathbf{c} = \mathbf{0}$, determine $\alpha_\text{min}$, $\alpha_\text{max}$, $k_\text{min}$, $k_\text{max}$ from \eqref{eq:alphamin_alphamax}
		\If{$q_i = x_{k_\text{max}}$}
			\State $\hat{q}_i \gets x_{k_\text{min}}$
		\Else
			\State $\hat{q}_i \gets x_{k_\text{max}}$
		\EndIf
		\State $\hat{\mathbf{q}} \gets (q_1, \dots, q_{i-1}, \hat{q}_i, q_{i+1}, \dots, q_M)$, which is an $M$-removal
		\If{$\hat{\mathbf{q}} \notin I$ and $\hat{\mathbf{q}} \notin R$}
			\Comment \textit{NB: this means we have not visited vertex $\hat{\mathbf{q}}$ yet.}
			\State Add $\hat{\mathbf{q}}$ to $I$, i.e.\ $I \gets I \cup \{ \hat{\mathbf{q}} \}$
		\EndIf
	\EndFor
	\State $R \gets R \cup \{ \mathbf{q} \}$
\EndWhile
\State \textbf{Return} $R$

\end{algorithmic}
\end{algorithm}

Theoretically, Algorithm~\ref{alg:massiveremoval} can be used to determine \emph{all} quadrature rules $\mathcal{A}^{(K)}_N$ with $\mathcal{A}^{(K)}_N \varphi = \mathcal{I}^{(K)} \varphi$ for all $\varphi \in \Phi_D$. All these rules are obtained by computing all $M$-removals with $M = K-N$ of the quadrature rule $X_K = Y_K$ with $w_k = 1/(K+1)$ for all $k = 0, \dots, K$. However, in practice this is intractable, as the number of $M$-removals grows rapidly in $M$ and $K-N$ is typically a large quantity.

\subsubsection{The nested implicit quadrature rule}
\label{subsubsec:nestedimplicitqrule}
\begin{figure}[t]
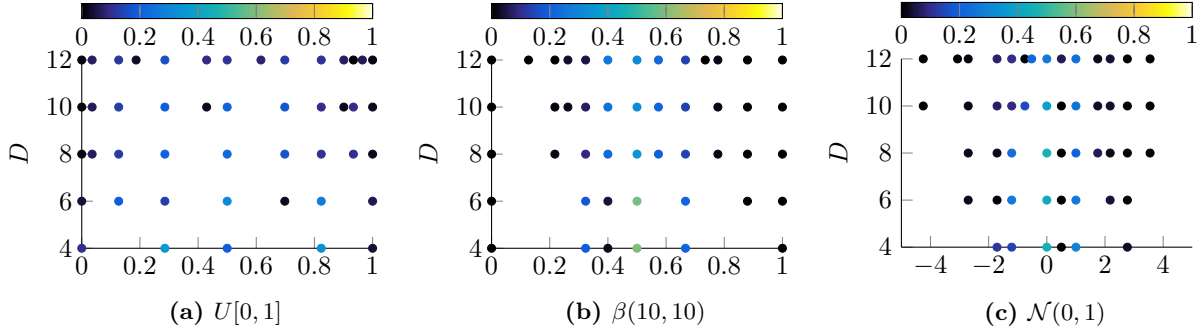

	\centering
	\begin{minipage}{.33\textwidth}
		\centering
		\includepgf{\textwidth}{.75\textwidth}{uniform-ex.tikz}
		\subcaption{$U[0, 1]$}
	\end{minipage}%
	\begin{minipage}{.33\textwidth}
		\centering
		\includepgf{\textwidth}{.75\textwidth}{beta1010-ex.tikz}
		\subcaption{$\beta(10, 10)$}
	\end{minipage}%
	\begin{minipage}{.33\textwidth}
		\centering
		\includepgf{\textwidth}{.75\textwidth}{normal-ex.tikz}
		\subcaption{$\mathcal{N}(0, 1)$}
	\end{minipage}
	\caption{Examples of nested nodal sets constructed with $10^5$ samples. The initial nodes are in all three cases $[0, 1/2, 1]$.}
	\label{fig:extendedimplicitqrule}
\end{figure}

\begin{figure}[t]
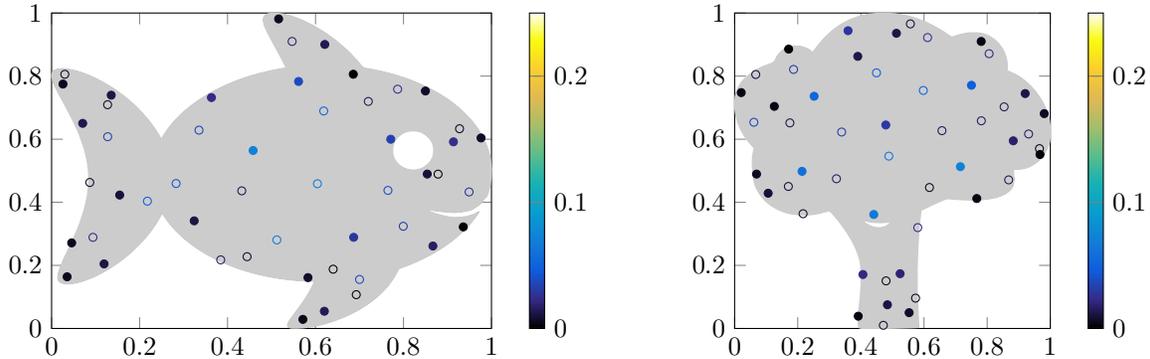

	\centering
	\begin{minipage}{.5\textwidth}
		\centering
		\includepgf{.9\textwidth}{.7\textwidth}{fish-ex.tikz}
	\end{minipage}%
	\begin{minipage}{.5\textwidth}
		\centering
		\includepgf{.7\textwidth}{.7\textwidth}{tree-ex.tikz}
	\end{minipage}
	\caption{The implicit quadrature rule of 25 nodes (closed circles) and 50 nodes (open and closed circles) respectively determined using the uniform distribution restricted to the gray area, using 100\,000 samples. The colors refer to the weights of the largest quadrature rule.}
	\label{fig:logos-extendedimplicitqrule}
\end{figure}
In this section the key algorithm of this paper is presented, namely the nested implicit quadrature rule for arbitrary sample sets. It is constructed by combining the algorithms from the previous sections. Given a quadrature rule, two different refinements (or a combination of both) are considered. Firstly, the number of samples $K$ can be increased to obtain a more accurate estimate of $\mu^{(K)}_j$. Secondly, $D$ can be increased to obtain a more accurate quadrature rule.

The set-up is similar to the one used so far, i.e.\ let $\{y_k\}$ be a sequence of samples, $X^{(K)}_N$ be a set of nodes, $W^{(K)}_N$ be a set of non-negative weights and assume the following holds for a certain $D$:
\begin{equation}
	\sum_{k=0}^N \varphi_j(x_k) w_k = \mu^{(K)}_j, \text{ for $j = 0, \dots, D$, $x_k \in X^{(K)}_N$, and $w_k \in W^{(K)}_N$}.
\end{equation}

Let $D_+$ and $K_+$ be the desired number of basis vectors and (possibly larger) number of samples respectively and assume $D_+ \geq D$. The goal is to determine $X^{(K_+)}_{N+M}$ and $W^{(K_+)}_{N+M}$ such that $W^{(K_+)}_{N+M}$ is non-negative, $X^{(K)}_N \subset X^{(K_+)}_{N+M}$, and
\begin{equation}
	\sum_{k=0}^{N+M} \varphi_j(x_k) w_k = \mu^{(K_+)}_j, \text{ for $j = 0, \dots, D_+$, $x_k \in X^{(K_+)}_{N+M}$, and $w_k \in W^{(K_+)}_{N+M}$}.
\end{equation}
In other words, we consider $K_+$ samples $Y_{K_+}$ and want to determine a positive quadrature rule that integrates all $\varphi \in \Phi_{D_+}$ exactly by adding $M$ nodes to $X_N$ (with $M$ minimal).

The iterative procedure is similar to Algorithm~\ref{alg:implicitqrulemath} and consists of 4 steps: (i) determine or obtain the next sample $y_{K+1}$, (ii) update the nodes and weights according to \eqref{eq:update1}, (iii) determine all possible removals (see Algorithm~\ref{alg:massiveremoval}), and finally (iv) remove nodes such that the obtained quadrature rule is as small as possible. The last step consists of finding the $M$-removal such that $X^{(K_+)}_{N+M} \setminus X^{(K)}_N$ (i.e.\ the set of new nodes) becomes as small as possible. If a node $x_k \in X^{(K)}_N$ is part of the $M$-removal, its weight is simply set to 0 (this is not problematic: the weights change again in subsequent iterations).

The initialization of the algorithm depends on whether {more basis vectors are considered}, a larger number of samples is considered, or the set of samples is changed (e.g.\ the sequence of samples is redrawn):
\begin{enumerate}
	\item If $D_+ = D$ and $K_+ > K$, the procedure is a continuation of the original Algorithm~\ref{alg:implicitqrulemath} and no initialization is necessary.
	\item If $D_+ > D$, we need to reiterate over all samples to determine $\mu_j^{(K)}$ for $j > D$. The algorithm can be initialized using $X^{(K)}_N$ as nodes, using all weights equal to $1/(N+1)$, and using the samples $Y_{K_+} \setminus X^{(K)}_D$.
	\item If the sequence of samples is regenerated from the underlying distribution, then in general $X^{(K)}_N \not\subset Y_{K_+}$. Therefore, the algorithm is initialized with $X^{(K+1)}_N \cup \{ y_0 \}$ and $W^{(K+1)}_N = \{0, \dots, 0, 1\}$.
\end{enumerate}

The outline of this algorithm is provided in Algorithm~\ref{alg:extendimplicitqrulemath}, which is a straightforward extension of Algorithm~\ref{alg:implicitqrulemath} with additional bookkeeping to incorporate the removal of multiple nodes. Some examples of nested sequences are gathered in Figure~\ref{fig:extendedimplicitqrule}~and~\ref{fig:logos-extendedimplicitqrule}. In the first figure all three nodal sequences are initialized with the nodes $0$, $1/2$, and $1$. If these nodes are used to construct conventional interpolatory quadrature rules, then the quadrature rule is positive if the uniform or Beta distribution is used, but has negative negative weights if the normal distribution is considered (the weights are $3$, $-4$, and $2$ respectively). However, the proposed algorithm incorporates these nodes without difficulty in subsequent quadrature rules, resulting into positive weights. Note that the quadrature rules of polynomial degree 4 have 6 nodes in case of the Beta and normal distribution. The subsequent quadrature rules of the normal distribution have higher number of nodes than the degree, which is due to the ``bad'' initial set of nodes.

A two-dimensional example is presented in Figure~\ref{fig:logos-extendedimplicitqrule}. Here, the initial quadrature rule is depicted with closed circles and its extension thereof with open circles.

\begin{algorithm}[t]
\caption{The nested implicit quadrature rule}
\label{alg:extendimplicitqrulemath}
\begin{algorithmic}[1]
\Require Samples $\{y_0, \dots, y_{K_\text{max}}\}$, quadrature nodes $X_N$, $\Phi_D = \linspan\{\varphi_0, \dots, \varphi_D\}$
\Ensure Positive quadrature rule $X_{N+M}$, $W_{N+M}$

~

\State Initialize $X^{(N)}_N$ and $W^{(N)}_N$, e.g. $X^{(N)}_N \gets X_N$, $W^{(N)}_N \gets \{ 1/(N+1), \dots, 1/(N+1) \}$
\State $M \gets 0$

\For{$K = N, \dots, K_\text{max}$}
	\State \parbox{\fatwidth}{\textbf{Add node:}} $X^{(K+1)}_{N+M+1} \gets X^{(K)}_{N+M} \cup \{y_{K-N}\}$
	\State \parbox{\fatwidth}{~} $W^{(K+1)}_{N+M+1} \gets (K+1) / (K+2) W^{(K)}_{N+M} \cup \{ 1/(K+2) \}$
	\State \parbox{\fatwidth}{\textbf{Update weights:}} Construct $V_D(X_{N+M+1}^{(K+1)})$
	\State \parbox{\fatwidth}{~} Determine null vectors $\mathbf{c}^1, \dots, \mathbf{c}^M$ and determine all $M$-removals
	\State \parbox{\fatwidth}{\textbf{Choose:}} Let $\mathbf{q} = (q_1, \dots, q_M)$ be an $M$-removal

	\Comment \textit{NB: we choose the one that makes $X^{(K+1)}_{N+M+1} \setminus X_N$ the smallest.}

	\State \parbox{\fatwidth}{\textbf{Remove node:}} Let $(\alpha_1, \dots, \alpha_n)$ such that $\hat{\mathbf{w}} \coloneqq \mathbf{w} - \sum_{j=1}^M \alpha_j c^j_k = 0$ for all $\hat{w}_j$ with $x_j \in \mathbf{q}$
	\State \parbox{\fatwidth}{~} $\hat{M} \gets \#\left\{x_k \in X^{(K+1)}_{N+M+1} \mid x_k \notin X_N \text{ and } w_k - \sum_{j=1}^M \alpha_j c^j_k = 0\right\}$
	\State \parbox{\fatwidth}{~} $X^{(K+1)}_{N+\hat{M}+1} \gets \left\{ x_k \in X^{(K+1)}_{N+M+1} \mid w_k - \sum_{j=1}^M \alpha_j c^j_k > 0 \text{ or } x_k \in X_N \right \}$
	\State \parbox{\fatwidth}{~} $W^{(K+1)}_{N+\hat{M}+1} \gets \left\{ w_k - \sum_{j=1}^M \alpha_j c^j_k \mid x_k \in X^{(K+1)}_{N+\hat{M}+1}\right\}$
	\State \parbox{\fatwidth}{~} $M \gets \hat{M}$
\EndFor

\State \textbf{Return $X^{(K_\text{max})}_{N+M}$, $W^{(K_\text{max})}_{N+M}$}

\end{algorithmic}
\end{algorithm}

Again it holds that different sample sets produce different quadrature rules. Similarly as in Section~\ref{subsec:basic}, this can be eradicated by using deterministic samplers. An additional degree of freedom arises when choosing the $M$-removal, as there might be several $M$-removals that remove the largest number of nodes from $X^{(K_+)}_{N+M} \setminus X^{(K)}_N$. In the quadrature rules constructed in this work, we select one randomly.

The large advantage of the nested implicit quadrature rule is that it is dimension agnostic, basis agnostic, space agnostic, and distribution agnostic, which are properties it carries over from the basic implicit quadrature rule. Virtually any space and any distribution can be used, as long as the distribution has finite moments and a set of samples can be generated, can be determined, or is available.

\section{Numerical examples}
\label{sec:numerics}
Two different types of test cases are employed to demonstrate the discussed properties of our proposed quadrature rule, in particular the independence from the underlying distribution.

The first class of cases exists of explicitly known test functions and distributions to assess the accuracy of the quadrature rule for integration purposes. To this end, the Genz integration test functions~\cite{Genz1984} are employed and a comparison is made with a Monte Carlo approach. Moreover, it is instructive to see how the convergence compares with that of a sparse grid, although the comparison is strictly incorrect, as a Smolyak sparse grid converges to the true integral.

Secondly a partial differential equation (PDE) with random coefficients is considered, where the goal is to infer statistical moments about the solution of a PDE with random boundary conditions. The equations under consideration are the inviscid Euler equations modeling the flow around an airfoil, where the inflow parameters and the shape of the airfoil are assumed to be uncertain.

The Genz integration test functions are studied in Section~\ref{subsec:genz}. The Euler equations are considered in Section~\ref{subsec:spde}. 

\subsection{Genz test functions}
\label{subsec:genz}
The Genz integration test functions~\cite{Genz1984} are a set of test function to assess the accuracy of numerical integration routines, see Table~\ref{tbl:genz}. Each test function has a certain attribute that is challenging for most numerical integration routines. The exact value of the integral of any of these test functions on the unit hypercube can be determined exactly. In this section, these functions are used to test the implicit quadrature rule. The goal is to assess the absolute integration error for increasing number of nodes in a 5-dimensional setting, i.e.\ to assess:
\begin{equation}
	e_N = \left|\mathcal{A}^{(K_\text{max})}_N u - \frac{1}{K_\text{max}+1} \sum_{k=0}^{K_\text{max}} u(y_k)\right|,
\end{equation}
for samples $y_0, \dots, y_{K_\text{max}}$ and various increasing $N$. The number of samples is chosen such that the quadrature error dominates and the sampling error $| \mathcal{I}^{(K_\text{max})} u - \mathcal{I} u |$ is small. We compare the approximation with that of a Monte Carlo approach, where we assess the following error:
\begin{equation}
	e_N = \left|\frac{1}{N+1} \sum_{k=0}^N u(y_k) - \frac{1}{K_\text{max}+1} \sum_{k=0}^{K_\text{max}} u(y_k)\right|,
\end{equation}
i.e.\ it is considered as a quadrature rule with nodes $\{y_k\}$ and weights $1 / (N+1)$. If the underlying distribution is tensorized, we also study the Smolyak sparse grid, which is constructed using exponentially growing Clenshaw--Curtis quadrature rules in conjunction with the combination rule~\cite{Novak1999}. The sparse grid converges to the true value of the integral and therefore we use the true value of the integral to assess its convergence, even though the comparison is not completely fair in this case.

\begin{table}
	\centering
	\caption{The test functions from Genz~\citep{Genz1984}. All $d$-variate functions depend on the $d$-element vectors $\mathbf{a}$ and $\mathbf{b}$. The vector $\mathbf{b}$ is an offset parameter to shift the function. The vector $\mathbf{a}$ describes the degree to which the family attribute is present.}
	\begin{tabular}{l l}
		\textbf{Integrand Family} & \textbf{Attribute} \\
		\hline
		\hline
		$u_1(x) = \cos\left(2\pi b_1 + \sum_{i=1}^d a_i x_i\right)$ & Oscillatory \\
		$u_2(x) = \prod_{i=1}^d \left(a_i^{-2} + (x_i - b_i)^2\right)^{-1}$ & Product Peak \\
		$u_3(x) = \left(1 + \sum_{i=1}^d a_i x_i\right)^{-(d+1)}$ & Corner Peak \\
		$u_4(x) = \exp\left(- \sum_{i=1}^d a_i^2 (x_i - b_i)^2 \right)$ & Gaussian \\
		$u_5(x) = \exp\left(- \sum_{i=1}^d a_i |x_i - b_i|\right)$ & $C_0$ function \\
		$u_6(x) = \begin{cases}
			0 &\text{if $x_1 > b_1$ or $x_2 > b_2$} \\
			\exp\left(\sum_{i=1}^d a_i x_i\right) &\text{otherwise}
		\end{cases}$ & Discontinuous
	\end{tabular}
	\label{tbl:genz}
\end{table}

The numerical experiment is repeated twice for two different input distributions. Firstly, the uniform distribution is used to be able to compare the methodology with conventional quadrature rule methods. Secondly, a highly correlated multivariate distribution (inspired by Rosenbrock function) is used to demonstrate the independence of the convergence rate from the input distribution.

To obtain meaningful results, the offset and shape parameters $\mathbf{a}$ and $\mathbf{b}$ of the Genz functions are chosen randomly and the numerical experiment is repeated 50 times. The obtained 50 absolute integration errors are averaged. The vector $\mathbf{a}$ is obtained by firstly sampling uniformly from ${[0,1]}^5$ and secondly scaling $\mathbf{a}$ such that $\| \mathbf{a} \|_2 = 5/2$. The vector $\mathbf{b}$ is uniformly distributed in ${[0, 1]}^5$ without further scaling, as it is an offset parameter.

The implicit quadrature rule is generated with $K_\text{max} = 10^4$ samples drawn randomly from the two input distributions respectively and the Monte Carlo approximation is determined using a subset of these samples, such that both the Monte Carlo approximation and the implicit quadrature rule converge to the same result. The initial quadrature rule of one single node is determined randomly and the rule is extended by applying Algorithm~\ref{alg:extendimplicitqrulemath}. Each extension is such that $D$ doubles, up to $D = 2^{10} = 1024$, but we emphasize that any granularity can be used here. Recall that $\Phi_D = \linspan\{\varphi_0, \dots, \varphi_D\}$ where $\varphi_j$ are $d$-variate polynomials sorted graded reverse lexicographically. Hence each extension integrates a larger number of polynomials exactly. For sake of completeness, a comparison is made with a non nested implicit quadrature rule, which is regenerated for each $D$ by means of Algorithm~\ref{alg:implicitqrulemath}.

\begin{figure}
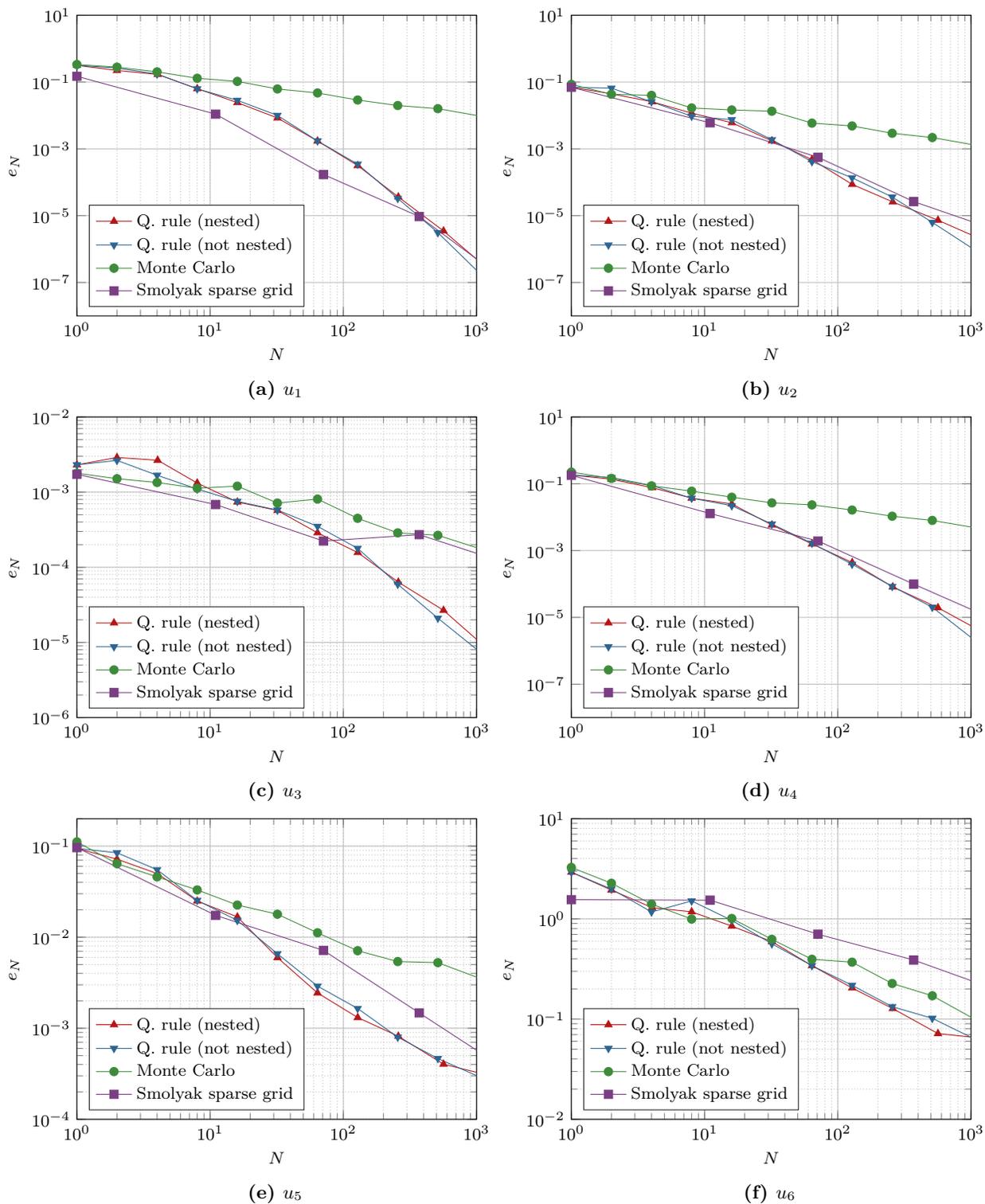

	\centering

	\begin{minipage}[t]{.5\textwidth}
		\centering
		\footnotesize
		\includepgf{\textwidth}{.8\textwidth}{genz-1.tikz}
		\subcaption{$u_1$}
	\end{minipage}%
	\begin{minipage}[t]{.5\textwidth}
		\centering
		\footnotesize
		\includepgf{\textwidth}{0.8\textwidth}{genz-2.tikz}
		\subcaption{$u_2$}
	\end{minipage}

	\begin{minipage}[t]{.5\textwidth}
		\centering
		\footnotesize
		\includepgf{\textwidth}{0.8\textwidth}{genz-3.tikz}
		\subcaption{$u_3$}
	\end{minipage}%
	\begin{minipage}[t]{.5\textwidth}
		\centering
		\footnotesize
		\includepgf{\textwidth}{0.8\textwidth}{genz-4.tikz}
		\subcaption{$u_4$}
	\end{minipage}

	\begin{minipage}[t]{.5\textwidth}
		\centering
		\footnotesize
		\includepgf{\textwidth}{0.8\textwidth}{genz-5.tikz}
		\subcaption{$u_5$}
	\end{minipage}%
	\begin{minipage}[t]{.5\textwidth}
		\centering
		\footnotesize
		\includepgf{\textwidth}{0.8\textwidth}{genz-6.tikz}
		\subcaption{$u_6$}
	\end{minipage}%

	\caption{Convergence of the absolute integration error for Genz test functions using the nested and non nested implicit quadrature rule, Monte Carlo sampling, and the Smolyak sparse grid using the uniform distribution.}
	\label{fig:convergence-uniform}
\end{figure}

\subsubsection{Uniform distribution}
The multivariate uniform distribution in ${[0,1]}^d$ (with $d = 5$ in this case) can be constructed by means of a tensor product of multiple univariate uniform distributions. It is therefore possible to approximate the integral using the well-known Smolyak sparse grid. The results of the four integration routines under consideration (Monte Carlo, nested and non nested implicit quadrature rule, and Smolyak sparse grid) are depicted in Figure~\ref{fig:convergence-uniform}. Here, $N$ denotes the number of nodes of the quadrature rules and the Smolyak sparse grid is refined by increasing the sparse grid level equally in all dimensions.

The accuracy of a quadrature rule is highly dependent on the analyticity and smoothness of the integrand. Globally analytic functions can be approximated well using polynomials, i.e.\ $\inf_{q \in \Phi_D} \| q - u \|_\infty$ decays fast. This property is reflected in the results.

The first four Genz functions (i.e.\ $u_1$, $u_2$, $u_3$, and $u_4$) are smooth and therefore the most suitable for integration by means of a quadrature rule. The best convergence is observed for the oscillatory, product peak, and Gaussian function, which are analytic. The corner peak is analytic, but has very slowly decaying derivatives, such that the quadrature rule approximation only converges exponentially fast for very large number of nodes (which are not considered here).

The continuous (but not differentiable) $C_0$ function follows a similar reasoning. It is not globally analytic, hence no exponential convergence is obtained. The Smolyak quadrature rule has a slightly larger error in this case compared to the implicit quadrature rule (arguably due to its negative weights), even though it seems that the rate of convergence is similar.

Integrating the discontinuous function by means of a positive quadrature rule does not yield any improvement over Monte Carlo sampling. The Smolyak sparse grid performs worse in this case due to its negative weights and usage of the Clenshaw--Curtis quadrature rule (which is not suitable for integration of discontinuous functions).

\begin{figure}
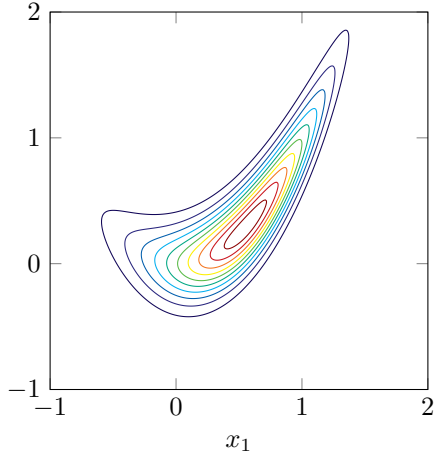

	\centering
	\includepgf{.4\textwidth}{.4\textwidth}{rosenbrock.tikz}
	\caption{The bivariate Rosenbrock distribution.}
	\label{fig:rosenbrock}
\end{figure}

\begin{figure}
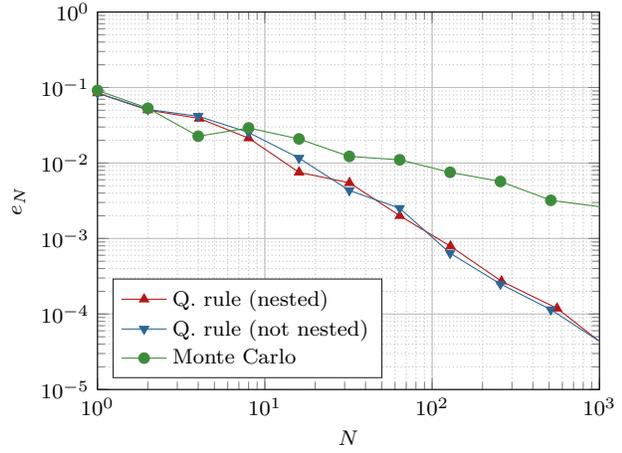
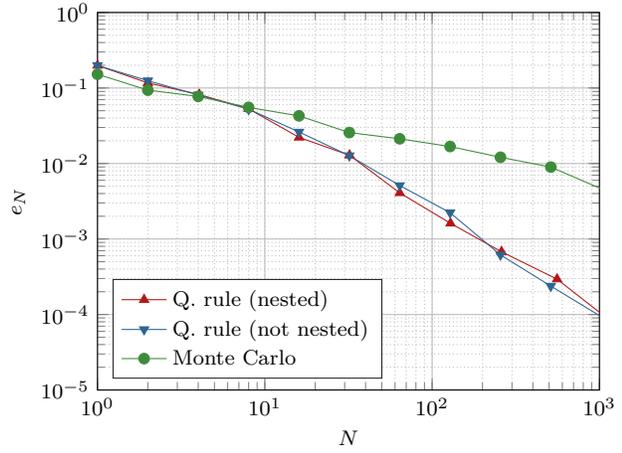
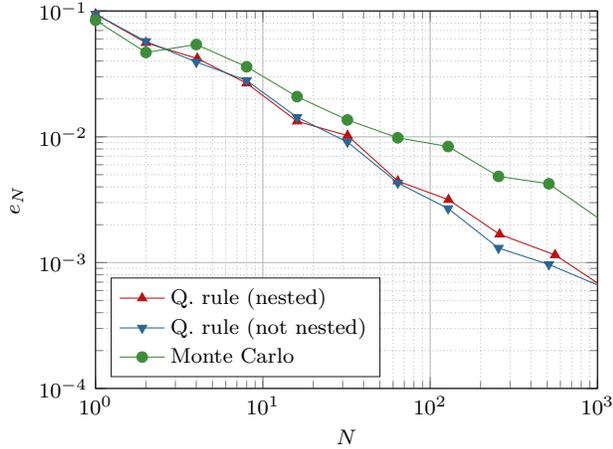
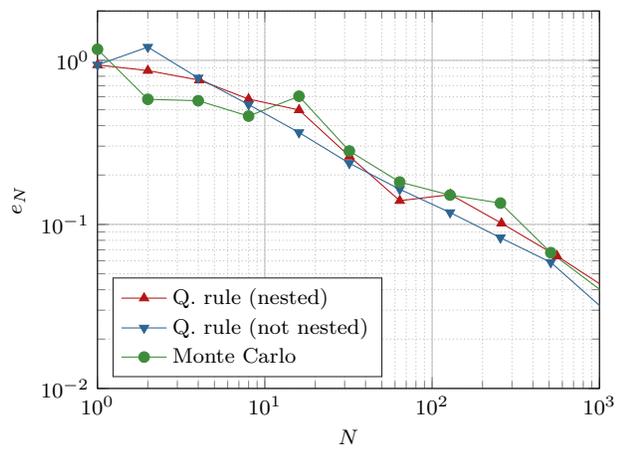

	\centering

	\begin{minipage}[t]{.5\textwidth}
		\centering
		\footnotesize
		\includepgf{\textwidth}{.8\textwidth}{genz-rb-1.tikz}
		\subcaption{$u_1$}
	\end{minipage}%
	\begin{minipage}[t]{.5\textwidth}
		\centering
		\footnotesize
		\includepgf{\textwidth}{0.8\textwidth}{genz-rb-2.tikz}
		\subcaption{$u_2$}
	\end{minipage}

	\begin{minipage}[b]{.5\textwidth}
		\parbox[c][0.7\textwidth][c]{\textwidth}{\centering (integral diverges)}
		\subcaption{$u_3$}
	\end{minipage}%
	\begin{minipage}[b]{.5\textwidth}
		\centering
		\footnotesize
		\includepgf{\textwidth}{0.8\textwidth}{genz-rb-4.tikz}
		\subcaption{$u_4$}
	\end{minipage}

	\begin{minipage}[t]{.5\textwidth}
		\centering
		\footnotesize
		\includepgf{\textwidth}{0.8\textwidth}{genz-rb-5.tikz}
		\subcaption{$u_5$}
	\end{minipage}%
	\begin{minipage}[t]{.5\textwidth}
		\centering
		\footnotesize
		\includepgf{\textwidth}{0.8\textwidth}{genz-rb-6.tikz}
		\subcaption{$u_6$}
	\end{minipage}%

	\caption{Convergence of the absolute integration error for Genz test function using the nested and non nested implicit quadrature rule and Monte Carlo sampling using the Rosenbrock distribution.}
	\label{fig:convergence-rosenbrock}
\end{figure}

\subsubsection{Rosenbrock distribution}
The large advantage of the implicit quadrature rule is that it can be constructed using any arbitrary set of samples. In order to assess this applicability to general distributions, the following distribution (which we will call the Rosenbrock distribution) is considered:
\begin{equation}
	\rho: \mathbb{R}^d \to \mathbb{R}, \text{ defined by } \rho(x) \propto \exp \left[ -f(x) \right] \pi(x),
\end{equation}
with $\pi$ the PDF of the multivariate standard Gaussian distribution and $f$ (a variant of) the multivariate Rosenbrock function:
\begin{equation}
	f(x) = f(x_1, \dots, x_d) = \sum_{k=1}^{d-1} \left[b \, (x_{i+1} - x_i^2)^2 + (a - x_i)^2\right], \text{ with $a = 1$ and $b = 10$}.
\end{equation}
The distribution $\rho$ for $d = 2$ is depicted in Figure~\ref{fig:rosenbrock}. This distribution is not optimal for integration by means of a sparse grid as it is not tensorized. Integration by means of a sparse grid converges prohibitively slow, even if the quadrature rules used for the construction are based on the marginals of the distribution. Therefore these results are omitted.

The \emph{exact} integral over the corner peak function $u_3$ diverges in this case, so approximating such an integral will result into a diverging quadrature rule. The results of the other functions are gathered in Figure~\ref{fig:convergence-rosenbrock}.

Similarly to the uniform case, the properties of the functions are reflected in the convergence rates of the approximations. The integrals of the smooth functions converge fast with a high rate, the convergence of the $C_0$ function is smaller, and the convergence of the discontinuous function is comparable to that of Monte Carlo. This result is significant, as it demonstrates that the convergence rate of $\mathcal{A}^{(K)}_N$ to the sampling-based integral $\mathcal{I}^{(K)}$ for $N \to K$ shows no significant dependence on the sample set used to construct the rules.

\subsection{Airfoil flow using Euler equations}
\label{subsec:spde}
In this section the flow over an airfoil is considered with uncertain geometry and inflow conditions. The quantity of interest is the pressure coefficient of the airfoil. The problem is five dimensional: two parameters model uncertain environmental conditions and three parameters model the uncertain geometry of the airfoil. The geometry is described by the 4-digit NACA profile and the equations governing the flow are the inviscid Euler equations. Problems of this type are well-known in the framework of uncertainty propagation~\cite{Liu2017,Witteveen2009a,Loeven2008} and allow to demonstrate the applicability of the proposed quadrature rule to a complex uncertainty propagation test case in conjunction with a complex underlying distribution.

\begin{table}
	\caption{Uncertain parameters of the airfoil test case.}
	\label{tbl:uncertainparameters}

	\centering
	\begin{tabular}{rll}
		& \textbf{Parameter} & \textbf{Distribution} \\
		\hline
		\hline
		$\alpha$ & Angle of attack & Uniform in $[0^\circ, 5^\circ]$ \\
		$M$ & Mach number & Beta$(4, 4)$ distributed in $[0.4, 0.6]$ \\
		&& \\
		$t$ & Maximum thickness of the airfoil & Beta$(4, 4)$ distributed in $[0.11, 0.13]$ \\
		$m$ & Maximum camber & Beta$(2, \cdot)$ distributed in $[0.02, 0.03]$ with mean: \\
		& & $\bar{m} = 1/4 t^2 + 0.02$ \\
		$p$ & Location of maximum camber & Uniform distributed in $[0.3, p_\text{max}]$ with: \\
		& & $p_\text{max} = \frac{1}{2} \left(2t + 16m\right)^5 + 0.3$
	\end{tabular}
\end{table}

The five uncertain parameters are summarized in Table~\ref{tbl:uncertainparameters}. The angle of attack and Mach number are distributed independently from the other parameters and describe uncertain inflow conditions. The remaining three parameters define the 4-digit NACA airfoil~\cite{Jacobs1935}. A NACA airfoil can be generated directly from these parameters and the mean of these parameters is approximately a NACA2312 airfoil. In this work the NACA airfoil with closed tip is considered by correction of the last parameter.

The compressible Euler equations are numerically solved using the finite volume solver \texttt{SU2}~\cite{Economon2016,Palacios2014}. The mesh is generated using \texttt{gmsh}~\cite{Geuzaine2009}. The implicit quadrature rules are determined by means of $K_\text{max} = 10^6$ randomly drawn samples from the distributions described in Table~\ref{tbl:uncertainparameters}, that are also being reused for consecutive refinements.

The quantity of interest in this test case is the pressure coefficient on the surface of the airfoil $C_p(x)$, i.e.\ the scaled pressure such that zero pressure equals a non obstructed flow:
\begin{equation*}
	C_p(x) = \frac{p(x) - p_\infty}{\frac{1}{2} \rho_\infty V_\infty^2}.
\end{equation*}
Here, $p(x)$ is the pressure at location $x$, $p_\infty$ is the freestream pressure (i.e.\ the pressure on the boundary in this case), $\rho_\infty$ is the freestream density of air, and $V_\infty$ is the freestream velocity of the fluid. The quadrature rule is applied piecewise to this quantity, where all pressure realizations are piecewise linearly interpolated onto the same mesh. Accuracy is measured by using the lift coefficient, which is the dimensionless coefficient relating the lift generated by the airfoil with the farfield fluid density and velocity. It follows naturally by integrating the pressure coefficient over the airfoil surface. In Figure~\ref{subfig:cp-x} we used the quadrature rule $\mathcal{A}_N u$ with $u = C_p$ and in Figure~\ref{subfig:cl-convergence} we used the quadrature rule with $u = C_l$.

The number of nodes of the quadrature rule is doubled until the difference between two consecutive quadrature rule estimations of the mean of the pressure coefficient is smaller than $10^{-2}$, which is the case at $512$ nodes. The results are summarized in Figure~\ref{fig:cp-x+convergence} and high order convergence is clearly visible. Both the mean and standard deviation show convergence with a larger rate than that of Monte Carlo (i.e.\ larger than $1/2$). We want to emphasize the importance of positive weights for this engineering test case, as it assures the estimation of the variance is non-negative even in the presence of high non-linearities.

The moments of the pressure coefficient around the airfoil are depicted in Figure~\ref{fig:cp-moments}, where the airfoil geometry that is plotted is the overlap of all airfoils (such that all depicted flow locations are in the flow for all quadrature rule nodes).

\begin{figure}
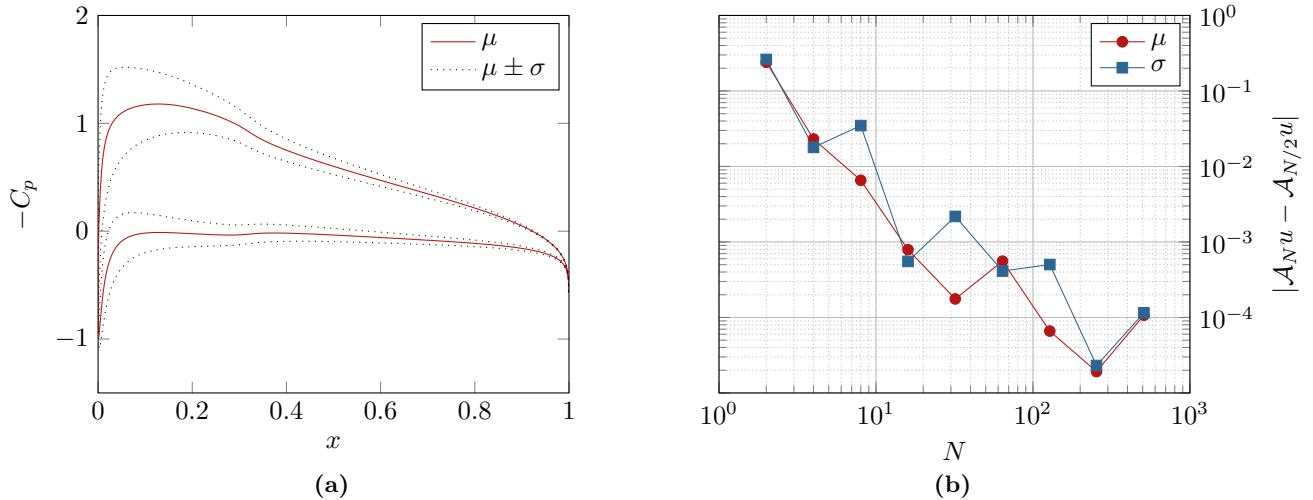

	\begin{minipage}[t]{.5\textwidth}
		\centering
		\includepgf{.95\textwidth}{.8\textwidth}{cp-x.tikz}
	\end{minipage}%
	\begin{minipage}[t]{.5\textwidth}
		\centering
		\includepgf{.95\textwidth}{.8\textwidth}{cl-convergence.tikz}
	\end{minipage}

	\begin{minipage}[t]{.5\textwidth}
		\centering
		\subcaption{}
		\label{subfig:cp-x}
	\end{minipage}%
	\begin{minipage}[t]{.5\textwidth}
		\centering
		\subcaption{}
		\label{subfig:cl-convergence}
	\end{minipage}

	\caption{\textit{Left:}~mean $\mu$ and standard deviation $\sigma$ of the pressure coefficient ($C_p$) of the airfoil determined using the finest quadrature rule. \textit{Right:}~Convergence of the mean $\mu$ and standard deviation $\sigma$ of the lift coefficient ($C_l$) by calculating consecutive differences.}
	\label{fig:cp-x+convergence}
\end{figure}

\begin{figure}
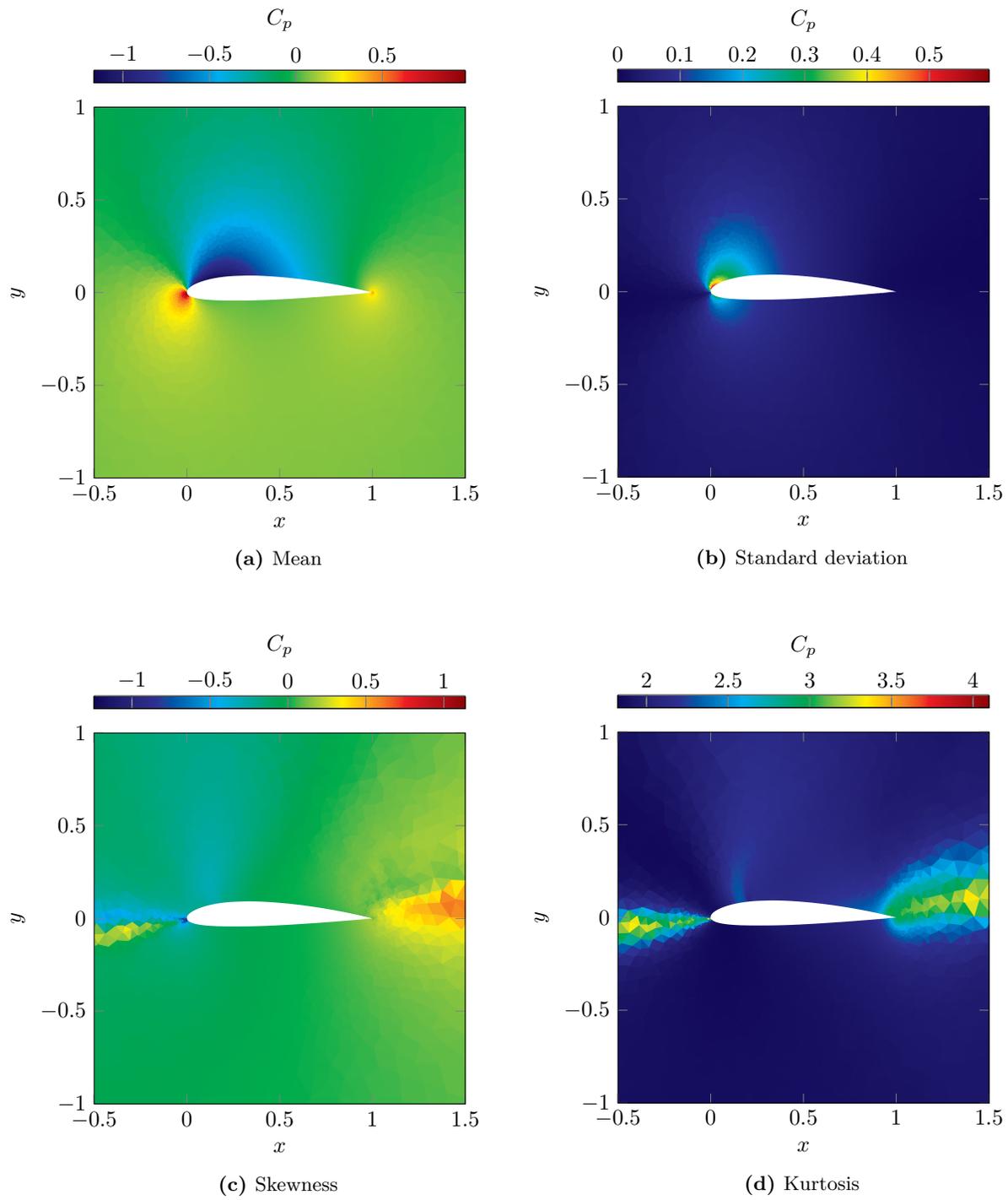

	\begin{minipage}{.5\textwidth}
		\centering
		\includepgf{.9\textwidth}{.9\textwidth}{cp-mean.tikz}
		\subcaption{Mean}
	\end{minipage}%
	\begin{minipage}{.5\textwidth}
		\centering
		\includepgf{.9\textwidth}{.9\textwidth}{cp-stddev.tikz}
		\subcaption{Standard deviation}
	\end{minipage}

	\vspace*{2\baselineskip}

	\begin{minipage}{.5\textwidth}
		\centering
		\includepgf{.9\textwidth}{.9\textwidth}{cp-skew.tikz}
		\subcaption{Skewness}
	\end{minipage}%
	\begin{minipage}{.5\textwidth}
		\centering
		\includepgf{.9\textwidth}{.9\textwidth}{cp-kurt.tikz}
		\subcaption{Kurtosis}
	\end{minipage}

	\caption{The first four centralized moments of the pressure coefficient around the airfoil.}
	\label{fig:cp-moments}
\end{figure}

The largest uncertainty of the flow is near the leading edge of the airfoil. This is in contrast to higher Mach number flows, for which it has been observed that the region of largest uncertainty occurs near the shock wave~\cite{Bos2018,Witteveen2009a}. The skewness of the pressure coefficient shows that its distribution is slightly skewed near the stagnation point on the leading edge of the airfoil. The skewness changes sign at the mean angle of attack. Moreover in the wake of the airfoil the distribution has positive skewness, which means that outliers of the distribution will more likely be larger than smaller compared to the average pressure coefficient. The kurtosis demonstrates that in many regions of the flow the distribution has much less mass in the tails than a Gaussian (and therefore more unlikely produces outliers). However, again near the leading edge and trailing edge the distribution differs and the tails of the distributions are significantly more influential. We cannot conclude that these locations are the regions of highest uncertainty, as the standard deviation (which is the scaling factor of both the skewness and the kurtosis) is very small in these regions. It is merely a sign that the uncertain behavior of the flow cannot be fully captured by a Gaussian distribution.

\section{Conclusion}
\label{sec:conclusion}
In this article, a novel nested quadrature rule is proposed which is constructed by solely using samples from a distribution. As the weights of the rule are positive, high order convergence is obtained for sufficiently smooth functions. The algorithm to construct the quadrature rule ensures positive weights, high degree, and nesting regardless of the sample set. The quadrature rules are very suitable for the purpose of non-intrusive uncertainty propagation, because positive weights ensure numerical stability and nesting allows for refinements that reuse computationally expensive model evaluations.

The results from integrating Genz test functions demonstrate that the convergence rate of the quadrature rule is similar to that of the Smolyak sparse grid approach, if the underlying sample distribution is uncorrelated and defined on a hypercube. The real advantage of the proposed quadrature rule appears when this is not the case: for correlated distribution on non hypercube domains our method still converges at a rate similar to the uncorrelated case, while a sparse grid quadrature rule hardly converges at all. Similar to the existing quadrature rules, the convergence depends on the specific properties of the integrand, in particular on its smoothness.

To demonstrate the applicability to practical test cases, the quadrature rule is used to determine the statistical moments of an airfoil flow with both independent and dependent input distributions. The results demonstrate the advantages of the quadrature rule: nesting can be used for easy refinements, positive weights ensure stability and positive approximations of positive quantities (such as the variance), dependency is naturally taken into account, and the accuracy of the rule yields high convergence rates.

The proposed algorithms provide a framework for the construction of quadrature rules that shows much potential for further extensions. For example, tailoring the basis to the integrand can yield an adaptive quadrature rule without deteriorating the accuracy of the rule as a whole. As the rule is solely based on sample sets, no stringent assumptions are necessary to apply the quadrature rule in such a different setting.

\section*{Acknowledgments}
This research is part of the Dutch EUROS program, which is supported by NWO domain Applied and Engineering Sciences and partly funded by the Dutch Ministry of Economic Affairs.

\small
\bibliographystyle{plainnatnourl}
\bibliography{article-cited}

\end{document}